\documentclass[final]{amsart}
\usepackage{amsfonts,amsmath,amssymb,mathtools}
\usepackage{graphicx}
\usepackage[comma, authoryear]{natbib}
\usepackage[colorlinks,citecolor=blue]{hyperref}
\usepackage{pdflscape,pdfsync}
\usepackage{xspace,colortbl,multirow}
\usepackage[dvipsnames]{xcolor}
\usepackage{color}
\usepackage{comment}

\usepackage{ulem}
\usepackage{setspace}
\linespread{1.5}

\newtheorem{thm}{Theorem}[section]
\newtheorem{lem}[thm]{Lemma}
\newtheorem{prop}[thm]{Proposition}

\theoremstyle{definition}

\newtheorem{rem}{Remark}[section]

\newtheorem{hyp}{Hypothesis}[section]

\numberwithin{equation}{section}

\def\disp{\displaystyle}

\def\crochet#1{\langle #1 \rangle}

\newcommand{\dA}{\ensuremath{\mathbb{A}}}
\newcommand{\dB}{\ensuremath{\mathbb{B}}}
\newcommand{\dC}{\ensuremath{\mathbb{C}}}

\newcommand{\dF}{\ensuremath{\mathbb{F}}}

\newcommand{\dH}{\ensuremath{\mathbb{H}}}
\newcommand{\I}{\ensuremath{\mathbb{I}}}
\newcommand{\dJ}{\ensuremath{\mathbb{J}}}

\newcommand{\dM}{\ensuremath{\mathbb{M}}}
\newcommand{\dN}{\ensuremath{\mathbb{N}}}

\newcommand{\dR}{\ensuremath{\mathbb{R}}}

\newcommand{\dW}{\ensuremath{\mathbb{W}}}

\newcommand{\cC}{\ensuremath{\mathcal{C}}}

\newcommand{\cE}{\ensuremath{\mathcal{E}}}
\newcommand{\cF}{\ensuremath{\mathcal{F}}}

\newcommand{\cI}{\ensuremath{\mathcal{I}}}
\newcommand{\cJ}{\ensuremath{\mathcal{J}}}

\newcommand{\cL}{\ensuremath{\mathcal{L}}}
\newcommand{\cM}{\ensuremath{\mathcal{M}}}

\newcommand{\cS}{\ensuremath{\mathcal{S}}}

\newcommand{\cW}{\ensuremath{\mathcal{W}}}

\def\i1{ [-\infty,\infty]}

\def\sn{\sqrt{n}\; }

\def\nt{{\lfloor n t \rfloor}}

\def\nt{{\lfloor n t \rfloor}}



\usepackage[color, notref, notcite]{showkeys}
\usepackage{enumerate}
\definecolor{labelkey}{rgb}{0.6,0,1}

\newcommand{\BR}[1]{\textcolor{red}{#1}}


\begin{document}

\title[CLT for martingales-II: convergence in weak dual topology]{Central limit theorems for martingales-II: convergence in the weak dual topology, with corrections}

\author{Bruno R\'{e}millard}
\address{GERAD and Department of Decision Sciences, HEC Montr\'{e}al\\
3000, che\-min de la C\^{o}\-te-Sain\-te-Ca\-the\-ri\-ne,
Montr\'{e}al (Qu\'{e}\-bec), Canada H3T 2A7}
\email{bruno.remillard@hec.ca}

\author{Jean Vaillancourt}
\address{Department of Decision Sciences, HEC Montr\'{e}al\\
3000, che\-min de la C\^{o}\-te-Sain\-te-Ca\-the\-ri\-ne,
Montr\'{e}al (Qu\'{e}\-bec), Canada H3T 2A7} \email{jean.vaillancourt@hec.ca}

\thanks{Partial funding in support of this work was provided by the Natural
Sciences and Engineering Research Council of Canada and  the Fonds
qu\'e\-b\'e\-cois de la re\-cher\-che sur la na\-tu\-re et les
tech\-no\-lo\-gies.
We would like to thank Bouchra R. Nasri for suggesting Proposition \ref{prop:Whitt_composition}.}


\begin{abstract}
A convergence theorem for martingales with c\`adl\`ag trajectories (right continuous with left limits everywhere) is obtained in the sense of the weak dual topology on Hilbert space, under conditions that are much weaker than those required for any of the usual Skorohod topologies.
Examples are provided to show that these conditions are also very easy to check and yield
useful asymptotic results, especially when the limit is a mixture of stochastic processes with discontinuities.
\end{abstract}
\keywords{Brownian motion, stochastic processes, weak convergence, martingales, mixtures.}

\subjclass{Primary 60G44, Secondary 60F17.}

\maketitle

\section{Introduction}
Let $D= D[0,\infty)$ be the space of real-valued c\`adl\`ag trajectories (right continuous with left limits everywhere).
The space is a natural choice for describing phenomena exhibiting bounded jumps, hence devoid
of essential singularities (and no jump at the origin).  In the literature, there are many examples of sequences of
$D$-valued martingales $M_n$ such that the finite dimensional distributions (f.d.d.) converge to those of $W\circ A$,
where $W$ is Brownian motion and $A$ is a non-decreasing process starting at zero and independent of $W$.
However, if $A$ is not continuous, the f.d.d. convergence cannot be extended in general to any of the three familiar topologies on $D$:
the $\mathcal{M}_1$-topology, the $\mathcal{J}_1$-topology or the $\mathcal{C}$-topology.

The focus of this paper is on those instances where the sequence
of $D$-valued processes of interest does not converge in any of these topologies.
This motivates the consideration of an alternative topology: the weak dual topology
on the dual of a Hilbert space, which coincides with the pointwise convergence in the Hilbert space in question,
namely that associated with locally square-integrable trajectories.
Since weak convergence of probability measures is at the core of this paper,
and in order to avoid the potential confusion brought upon by the overuse of the term ``weak'',
we will from now on refer to this weaker topology as the $\cL^2_w$-topology (and all related terms accordingly).

The paper is organized as follows. Section \ref{sec:ell_two} presents the $\cL^2_w$-topology.
The corresponding CLT for sequences of $D$-valued square integrable martingales is in Section \ref{sec:m1_clt}, along with the required preliminaries about martingales.
Section \ref{sec:ex} provides instances where even
$\mathcal{M}_1$-convergence (let alone $\mathcal{J}_1$- or $\mathcal{C}$-convergence) fails for interesting
sequences of $D$-valued processes, while $\cL^2_w$-convergence does occur.
Examples of applications are worked out in Section \ref{sec:ex}. More precisely, Section \ref{ssec:random_sums}  deals with stock price approximation and random sums, and it is also shown in Remark \ref{rem:error} that many results about the limiting behaviour of renewal counting processes in the literature are erroneous, showing the difficulty to work with $\cJ_1$ or $\cM_1$ convergence when the limit is not continuous. Next, in Section \ref{ssec:occupation_times}, we study the  occupation time for planar Brownian motion; in particular, we complete the proof of Theorem 3.1 of  \citet{Kasahara/Kotani:1979} and we correct their Theorem 3.2.
Ancillary results as well as most definitions and notations are presented in Appendix \ref{app:remvail},
while any unexplained terminology can be found in either \citet{Ethier/Kurtz:1986} or \citet{Jacod/Shiryaev:2003}.
The main proofs are collected in Appendix \ref{app:remvail_prfs}.


\section{Locally square integrable processes and $\cL^2_w$-convergence}\label{sec:ell_two}
Denote by $\cL^2_{loc}$ the space of real-valued Lebesgue measurable functions on $[0,\infty)$ which are
locally square integrable with respect to Lebesgue measure, in the sense that their restriction to $[0,T]$ belongs to
Hilbert space $\cL^2[0,T]$ for every $T>0$.
The (squared) $\cL^2[0,T]$-norm is denoted by $|x|_T^2:=\int_0^Tx^2(t)dt$.

A (deterministic) sequence $x_n\in\cL^2_{loc}$ is said to $\cL^2_w$-converge to $x\in\cL^2_{loc}$ if and only if the sequence of scalar products $\int_0^Tx_n(t)f(t)dt \to \int_0^Tx(t)f(t)dt$ converges as $n\to\infty$, for each $f\in \cL^2[0,T]$ and each $T>0$.
There is a metric on $\cL^2_{loc}$ which is compatible with the $\cL^2_w$-topology,
turning it into a complete separable metric (hereafter Polish) space --- see Appendix \ref{app:remvail}.

\begin{rem}\label{rem:DinsideL2}
Note that $D\subset\cL^2_{loc}$ since c\`adl\`ag functions are bounded on bounded time sets.
Furthermore $\mathcal{M}_1$-convergence of a (deterministic) sequence $x_n\in D$ to a limit $x\in D$
implies $\sup_n\sup_{t\in[0,T]}|x_n(t)-x(t)|<\infty$ for every $T>0$, since $\{x_n\}$ is a $\mathcal{M}_1$-relatively compact set
and hence uniformly bounded on finite time intervals.
By the dominated convergence theorem, convergence of $x_n$ to the same limit $x$ ensues in the
$\cL^2[0,T]$-norm topology for every $T>0$ and hence so does $\cL^2_w$-convergence; indeed,
a (deterministic) sequence $x_n\in\cL^2[0,T]$ converges to $x\in\cL^2[0,T]$ in the (strong) norm topology,
if and only if both $x_n$ $\cL^2_w$-converges to $x$ and $|x_n|_T$ converges to $|x|_T$.
Thus $x\mapsto|x|_T$ is not $\cL^2_w$-continuous even though $T\mapsto|x|_T$ is continuous.
\end{rem}

We next turn to weak convergence of probability measures on the Borel subsets of $\cL^2_{loc}$ for the $\cL^2_w$-topology.
All $\cL^2_{loc}$-valued $\mathbb{F}$-processes are built on the same filtered probability space $(\Omega,\cF,\dF, P)$ and
adapted to a filtration $\dF = (\cF_t)_{t\ge 0}$ satisfying the usual conditions (notably, right continuity of the filtration and completeness).
Since $\cL^2_{loc}$ is a Polish space, families of probability measures are relatively compact
if and only if they are tight, by Prohorov's Theorem. Here is a quick test for $\cL^2_w$-tightness.

\begin{lem}\label{lem:L2tightness}
A sequence of $\cL^2_{loc}$-valued $\mathbb{F}$-processes $X_n$ is $\cL^2_w$-tight if and only if
the sequence of random variables $\bigl\{|X_n|_T; n\ge1\bigr\}$ is tight, for each $T>0$.
\end{lem}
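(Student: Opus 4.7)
The plan is to prove the two implications separately, throughout exploiting the interplay between the $\cL^2_w$-topology on $\cL^2_{loc}$ and the weak topology on each $\cL^2[0,T]$. Because $\cL^2_{loc}$ is Polish under the $\cL^2_w$-topology, Prohorov's theorem reduces $\cL^2_w$-tightness of $\{X_n\}$ to producing, for each $\varepsilon>0$, a $\cL^2_w$-compact set $K_\varepsilon$ with $P(X_n\in K_\varepsilon)>1-\varepsilon$ for every $n$; moreover compactness and sequential compactness coincide on this metrizable space.

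For the ``only if'' direction, I would first observe that for each $T>0$, the restriction map $R_T:\cL^2_{loc}\to\cL^2[0,T]$ is sequentially continuous from $\cL^2_w$ into the weak topology on $\cL^2[0,T]$, as seen by extending each test function in $\cL^2[0,T]$ by zero past $T$. Hence if $K$ is $\cL^2_w$-compact, then $R_T(K)$ is weakly sequentially compact in $\cL^2[0,T]$, therefore weakly compact by Eberlein--\v{S}mulian, and consequently norm-bounded by the uniform boundedness theorem. Applied to a tightening compact $K_\varepsilon$ for $(X_n)$, this yields $M_{T,\varepsilon}:=\sup_{x\in K_\varepsilon}|x|_T<\infty$, so that $P(|X_n|_T\le M_{T,\varepsilon})\ge P(X_n\in K_\varepsilon)>1-\varepsilon$, proving tightness of $\{|X_n|_T;n\ge 1\}$.

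For the ``if'' direction, fix $\varepsilon>0$. Tightness of $|X_n|_k$ for each integer $k\ge 1$ allows me to select $M_k<\infty$ with $P(|X_n|_k>M_k)<\varepsilon/2^k$ for every $n$. Setting
\[
K_\varepsilon \;:=\; \bigl\{x\in\cL^2_{loc}:\ |x|_k\le M_k\ \text{for all }k\ge 1\bigr\},
\]
a union bound gives $P(X_n\notin K_\varepsilon)<\varepsilon$, so it remains to show that $K_\varepsilon$ is $\cL^2_w$-compact. Given $x^{(n)}\in K_\varepsilon$, weak sequential compactness of norm-bounded sets in the Hilbert space $\cL^2[0,k]$ (Banach--Alaoglu plus reflexivity) supplies, for each $k$, a subsequence along which $R_k(x^{(n)})$ converges weakly to some $y_k\in\cL^2[0,k]$; a diagonal extraction produces a single subsequence $x^{(n_j)}$ whose restriction converges weakly in every $\cL^2[0,k]$. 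Weak continuity of the restriction $\cL^2[0,\ell]\to\cL^2[0,k]$ for $k\le\ell$ forces the limits $y_k$ to agree on overlaps and thus to glue into a single $y\in\cL^2_{loc}$. Lower semicontinuity of the norm under weak convergence then gives $|y|_k\le M_k$, so $y\in K_\varepsilon$, and extending any $f\in\cL^2[0,T]$ by zero to some $[0,k]$ with $k\ge T$ verifies that $x^{(n_j)}\to y$ in $\cL^2_w$.

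The main obstacle is precisely this sequential-compactness verification for $K_\varepsilon$: it is where reflexivity of each $\cL^2[0,k]$, the diagonal extraction, and the consistency of the weak limits across different horizons must be combined. Once these ingredients are assembled, Prohorov's theorem closes out both directions.
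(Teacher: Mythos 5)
Your proof is correct and follows essentially the same route as the paper's one-line argument: Prohorov's theorem reduces $\cL^2_w$-tightness to relative compactness, and relatively compact sets are characterized by norm-boundedness of the restrictions to each $[0,T]$ (Banach--Alaoglu/reflexivity for sufficiency, uniform boundedness for necessity). You have simply made explicit the diagonal extraction, the gluing of weak limits across horizons, and the norm-boundedness of compact sets, all of which the paper leaves implicit.
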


\begin{proof}
Balls in $\cL^2[0,T]$ are relatively compact in the $\cL^2_w$-topology and because it is a Polish space, tightness is equivalent to relative compactness.
\end{proof}

Weak convergence of a sequence of $\cL^2_{loc}$-valued processes $X_n$ to an $\cL^2_{loc}$-valued process $X$
is denoted by $X_n\stackrel{\cL^2_w}{\rightsquigarrow} X$. For convenience, write $\cI(f)(t) = \int_0^t f(s)ds $ for any $t\ge 0$.
This convergence is characterized by the following result proven in Appendix \ref{pf:L2characterization}.

\begin{thm}\label{thm:L2characterization}
$X_n\stackrel{\cL^2_w}{\rightsquigarrow} X$ holds if and only if
\begin{enumerate}
\item[(i)]
$\bigl\{|X_n|_T; n\ge1\bigr\}$ is tight, for each $T>0$;

\item[(ii)]
There is an $\cL^2_{loc}$-valued process $X$ such that $\cI(X_n)  \stackrel{f.d.d.}{\rightsquigarrow} \cI(X)$.

\end{enumerate}
\end{thm}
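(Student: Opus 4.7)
The plan is to prove both directions using the fact that $\cL^2_w$-continuous linear functionals generate the Borel $\sigma$-algebra, combined with Prohorov's theorem.

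For the ($\Rightarrow$) direction, I would first observe that weak convergence in the Polish space $\cL^2_{loc}$ implies tightness, which via Lemma \ref{lem:L2tightness} yields (i). For (ii), for any $0\le s\le t\le T$ the functional $x\mapsto \int_0^T x(u)\mathbf{1}_{[s,t]}(u)du = \cI(x)(t)-\cI(x)(s)$ is $\cL^2_w$-continuous since $\mathbf{1}_{[s,t]}\in\cL^2[0,T]$. Hence for any $0\le t_1<\cdots<t_k$, the map $x\mapsto(\cI(x)(t_1),\ldots,\cI(x)(t_k))$ from $\cL^2_{loc}$ into $\dR^k$ is $\cL^2_w$-continuous, and the continuous mapping theorem delivers the claimed f.d.d. convergence.

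For the ($\Leftarrow$) direction, I would apply Lemma \ref{lem:L2tightness} to get $\cL^2_w$-tightness of $\{X_n\}$ from (i). By Prohorov's theorem, every subsequence admits a further subsequence $X_{n_k}\stackrel{\cL^2_w}{\rightsquigarrow}X'$ for some $\cL^2_{loc}$-valued $X'$. Applying the already established direction ($\Rightarrow$) to this subsequence yields $\cI(X_{n_k})\stackrel{f.d.d.}{\rightsquigarrow}\cI(X')$; combined with hypothesis (ii), this forces $\cI(X')\stackrel{f.d.d.}{=}\cI(X)$. The remaining task, which I expect to be the main obstacle, is to upgrade this equality of f.d.d. of $\cI$ to equality in $\cL^2_w$-law of $X$ and $X'$. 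The standard uniqueness-of-distribution argument will then conclude via a subsequence principle.

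To carry out this upgrade, I would argue that the Borel $\sigma$-algebra of $(\cL^2_{loc},\cL^2_w)$ is generated by the family of continuous linear functionals $\Phi_{T,f}(x):=\int_0^T x(u)f(u)du$ for $T>0$ and $f\in\cL^2[0,T]$, so the $\cL^2_w$-law of an $\cL^2_{loc}$-valued random element is determined by the joint distributions of $(\Phi_{T_1,f_1},\ldots,\Phi_{T_m,f_m})$. For $f$ a step function $\sum_i c_i\mathbf{1}_{[s_i,t_i]}$, the functional $\Phi_{T,f}(x)$ is a deterministic linear combination of increments of $\cI(x)$; hence equality of the f.d.d. of $\cI(X)$ and $\cI(X')$ yields equality of joint distributions for any finite tuple of such step-function functionals. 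For general $f_j\in\cL^2[0,T_j]$, approximate by step functions $f_j^{(N)}\to f_j$ in $\cL^2[0,T_j]$; by the Cauchy--Schwarz inequality $|\Phi_{T_j,f_j^{(N)}}(x)-\Phi_{T_j,f_j}(x)|\le |x|_{T_j}|f_j^{(N)}-f_j|_{T_j}$, so since a.s. paths of $X$ and $X'$ lie in $\cL^2_{loc}$, the approximations converge almost surely (hence in distribution). Passing to the limit in the joint laws gives equality of $(\Phi_{T_j,f_j}(X))_j$ and $(\Phi_{T_j,f_j}(X'))_j$ in distribution, which determines $X\stackrel{d}{=}X'$ as $\cL^2_w$-random elements.

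Since every $\cL^2_w$-subsequential limit has the same distribution as $X$, tightness and the Prohorov subsequence principle yield $X_n\stackrel{\cL^2_w}{\rightsquigarrow}X$. The only delicate point is the measurability/density step identifying arbitrary continuous linear functionals as almost-sure limits of step-function functionals, which requires the a.s.\ membership of sample paths in $\cL^2_{loc}$; this is automatic once (i) is upgraded to an a.s.\ statement for the limit via Skorohod representation on the Polish space $\cL^2_{loc}$.
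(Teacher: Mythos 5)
Your proof is correct and follows essentially the same route as the paper's: both reduce the claim to tightness (via Lemma \ref{lem:L2tightness}) plus the identification of the law through a determining family of $\cL^2_w$-continuous linear functionals generating the Borel $\sigma$-algebra, and both transfer the f.d.d.\ information on $\cI(X_n)$ (scalar products against indicators) to general $f\in\cL^2[0,T]$ by a step-function density argument controlled through Cauchy--Schwarz and the square-norm bound coming from (i). The only cosmetic difference is that the paper uses Hermite-function projections and verifies their convergence directly via Cram\'er--Wold, whereas you extract subsequential limits by Prohorov and identify their common law; these are logically equivalent organizations of the same argument.
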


The Cram{\'e}r-Wold device yields the equivalence of statement Theorem \ref{thm:L2characterization} $(ii)$ to
$\cI(X_n h)  \stackrel{f.d.d.}{\rightsquigarrow} \cI(X h)$, for any $h\in\cL^2_{loc}$.
Note that $X_n\stackrel{\cL^2_w}{\rightsquigarrow} X$ does not necessarily imply $X_n \stackrel{f.d.d.}{\rightsquigarrow} X$;
in fact it may even happen that $X_n(t) \stackrel{Law}{\rightsquigarrow} X(t)$ does not hold for any $t\in[0,T]$, even though
the integrals $\cI(X_n)(t)$ converge and their limits are almost surely differentiable everywhere.
Moreover, if $X_n\in D$, $X_n(t)$ is arbitrarily close to $\dfrac{\cI(X_n)(t+\epsilon)-\cI(X_n)(t)}{\epsilon}$.

\begin{rem}\label{rem:RdExtension1}
Extension to processes with $\dR^d$-valued trajectories, of $\cL^2_{loc}$-valued processes, $\cL^2_w$-convergence, and so on ---
through writing $|X|_t:=\sum_{i=1}^d|X_i|_t$, $X(s)f(s):=\sum_{i=1}^d X_i(s)f_i(s)$, etc.,
when $X=(X_1,\ldots,X_d)$ and $f=(f_1,\ldots,f_d)$ --- leads to immediate confirmation of
Remark \ref{rem:DinsideL2}, Lemma \ref{lem:L2tightness} and Theorem \ref{thm:L2characterization}
for processes with trajectories in $\dR^d$ as well. We need this clarification for the next statement.
\end{rem}

By Remark \ref{rem:RdExtension1}, $|(X_n,Y_n)|_T=|X_n|_T+|Y_n|_T$, and this sequence is tight if both sequences $|X_n|_T$ and $|Y_n|_T$ are tight. Also,
$\cI(X_n,Y_n)= (\cI(X_n),\cI(Y_n) )$, so the next result follows readily from Theorem \ref{thm:L2characterization}.

\begin{lem}\label{lem:lem_pairs}
If $\cL^2_{loc}$-valued processes $X_n$, $Y_n$, $X$ and $Y$ are such that
$X_n\stackrel{\cL^2_w}{\rightsquigarrow}X$, $Y_n \stackrel{\cL^2_w}{\rightsquigarrow}Y$ and
$\left( \cI(X_n) , \cI( Y_n) \right ) \stackrel{f.d.d.}{\rightsquigarrow}
\left( \cI(X)  , \cI(Y) \right )$,
then $(X_n,Y_n) \stackrel{\cL^2_w}{\rightsquigarrow} (X,Y)$.
\end{lem}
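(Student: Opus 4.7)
The plan is to apply the $\dR^d$-valued extension of Theorem \ref{thm:L2characterization} (confirmed in Remark \ref{rem:RdExtension1}) to the pair process $(X_n,Y_n)$, so that I need only verify its two conditions (i) and (ii) for this paired sequence.

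First I would verify condition (i) of the characterization, namely $\cL^2_w$-tightness of the pair. Under the $\dR^2$-valued norm introduced in Remark \ref{rem:RdExtension1}, one has $|(X_n,Y_n)|_T=|X_n|_T+|Y_n|_T$. Since $X_n\stackrel{\cL^2_w}{\rightsquigarrow}X$ and $Y_n\stackrel{\cL^2_w}{\rightsquigarrow}Y$, Lemma \ref{lem:L2tightness} (or equivalently the necessity direction of Theorem \ref{thm:L2characterization}(i)) yields that each of $\{|X_n|_T\}$ and $\{|Y_n|_T\}$ is tight on the real line, for every $T>0$. The sum of two tight sequences of nonnegative random variables is tight, by the routine $\varepsilon/2$ argument on the probability of exceeding a threshold, so $\{|(X_n,Y_n)|_T\}$ is tight for every $T>0$, giving condition (i).

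Next I would verify condition (ii). Under the $\dR^d$-valued conventions of Remark \ref{rem:RdExtension1}, the integral operator acts componentwise, so that $\cI((X_n,Y_n))=(\cI(X_n),\cI(Y_n))$ and similarly for the limit pair. The hypothesis thus translates exactly into $\cI((X_n,Y_n))\stackrel{f.d.d.}{\rightsquigarrow}\cI((X,Y))$, which is condition (ii). Invoking the sufficiency direction of the $\dR^d$-valued extension of Theorem \ref{thm:L2characterization} then delivers $(X_n,Y_n)\stackrel{\cL^2_w}{\rightsquigarrow}(X,Y)$.

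There is no real obstacle here: everything rests on Theorem \ref{thm:L2characterization} extended to $\dR^d$-valued trajectories and on the elementary tightness-of-sums fact. The only point that deserves a line of care is pointing out that tightness of the marginals $|X_n|_T$ and $|Y_n|_T$ implies tightness of the sum $|X_n|_T+|Y_n|_T$, so that one can bypass any direct joint tightness argument and rely solely on marginal tightness plus the assumed joint f.d.d.\ convergence of the integrals.
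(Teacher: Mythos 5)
Your proof is correct and is exactly the argument the paper intends: the lemma is presented as an immediate consequence of the $\dR^d$-valued extension of Theorem \ref{thm:L2characterization} announced in Remark \ref{rem:RdExtension1}, with condition (i) supplied by marginal tightness of $|X_n|_T$ and $|Y_n|_T$ (hence of their sum) and condition (ii) supplied verbatim by the hypothesis on $\left(\cI(X_n),\cI(Y_n)\right)$. No further comment is needed.
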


\begin{rem}\label{rem:RdExtension2}
Lemma \ref{lem:lem_pairs} also extends at once to processes with $\dR^d$-valued trajectories
and to $d$-tuples of processes instead of just pairs.
We proceed with $d=1$ for the rest of this section, since several of the statements involve the $\mathcal{M}_1$-topology,
which is not compatible with the linear structure on $D$ and therefore cannot be treated coordinatewise, save in some special cases
--- see \citet{Whitt:2002}.
\end{rem}

\begin{rem}\label{rem:L2normtightness}
In the light of Proposition \ref{prop:Cincreasing}, a sequence of $D$-valued processes $X_n$ satisfying both
$X_n \stackrel{f.d.d.}{\rightsquigarrow} X$ and $|X_n| \stackrel{f.d.d.}{\rightsquigarrow} |X|$ for some $X\in D$,
must also satisfy $|X_n|  \stackrel{\mathcal{C}}{\rightsquigarrow} |X|$ since the processes $|X_n|_t, |X|_t$ are nondecreasing in $t\ge 0$; hence
the sequence of continuous processes $\bigl\{|X_n|; n\ge1\bigr\}$ is $\mathcal{C}$-tight.
By Remark \ref{rem:DinsideL2}, $D$-valued processes $X$, $X_n$ for which there holds
$X_n \stackrel{\mathcal{M}_1}{\rightsquigarrow} X$ automatically verify both $X_n\stackrel{\cL^2_w}{\rightsquigarrow} X$ and
$|X_n| \stackrel{f.d.d.}{\rightsquigarrow} |X| $ (as well as $X_n \stackrel{f.d.d.}{\rightsquigarrow} X$, by definition of
$\mathcal{M}_1$-convergence); thus, $|X_n| \stackrel{\mathcal{C}}{\rightsquigarrow} |X|$ follows.
As a result, so does $|X_n-X| \stackrel{\mathcal{C}}{\rightsquigarrow} 0$.
\end{rem}

The following result follows directly from Remark \ref{rem:L2normtightness}.

\begin{prop}\label{prop:norm}
Suppose $X$, $X_n$ are  $D$-valued processes satisfying $X_n \stackrel{\mathcal{M}_1}{\rightsquigarrow} X$.
Then $X_n\stackrel{\cL^2_w}{\rightsquigarrow} X$, $|X_n| \stackrel{\mathcal{C}}{\rightsquigarrow} |X|$
and $|X_n-X|  \stackrel{\mathcal{C}}{\rightsquigarrow} 0$.
\end{prop}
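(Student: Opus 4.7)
My plan is to leverage Remark \ref{rem:DinsideL2}, which upgrades $\mathcal{M}_1$-convergence of a deterministic sequence in $D$ to $\cL^2[0,T]$-norm convergence for every $T > 0$ (via uniform boundedness on bounded intervals and dominated convergence). Since norm convergence dominates $\cL^2_w$-convergence, this makes the inclusion $(D, \mathcal{M}_1) \hookrightarrow (\cL^2_{loc}, \cL^2_w)$ continuous, so the continuous mapping theorem yields $X_n \stackrel{\cL^2_w}{\rightsquigarrow} X$ immediately.

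For the second assertion, I would check that the map $x \mapsto |x|_\cdot$ from $(D, \mathcal{M}_1)$ into $C[0,\infty)$ endowed with uniform convergence on compacts is also continuous. The reverse triangle inequality
\[
\sup_{t \in [0,T]} \bigl| |x_n|_t - |x|_t \bigr| \le |x_n - x|_T
\]
transforms $\cL^2[0,T]$-norm convergence into uniform convergence on $[0,T]$ of the continuous nondecreasing scalar functions $|x_n|_\cdot$, so a second invocation of the continuous mapping theorem gives $|X_n| \stackrel{\mathcal{C}}{\rightsquigarrow} |X|$.

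The delicate assertion is $|X_n - X| \stackrel{\mathcal{C}}{\rightsquigarrow} 0$, because this quantity depends on the joint distribution of $(X_n, X)$, not on marginals alone. To handle it I would invoke Skorokhod's representation theorem, available since $(D, \mathcal{M}_1)$ is Polish, to obtain on an enlarged probability space versions $\tilde X_n, \tilde X$ of the original processes satisfying $\tilde X_n \to \tilde X$ almost surely in the $\mathcal{M}_1$ topology. Path by path, Remark \ref{rem:DinsideL2} then delivers $|\tilde X_n - \tilde X|_T \to 0$ a.s., and monotonicity of $t \mapsto |\cdot|_t$ gives $\sup_{t \in [0,T]} |\tilde X_n - \tilde X|_t = |\tilde X_n - \tilde X|_T \to 0$ a.s., i.e.\ $\mathcal{C}$-convergence to $0$ almost surely. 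Since the limit is a constant, convergence in law coincides with convergence in probability and transfers back to the original processes.

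The main obstacle is precisely this final step: the first two claims follow by direct continuous mapping arguments, but the joint nature of $|X_n - X|$ forces a coupling, for which the Polishness of $(D, \mathcal{M}_1)$ and Skorokhod representation are the natural vehicles.
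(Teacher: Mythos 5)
Your treatment of the first two assertions is sound and essentially the paper's own route: the paper likewise derives $X_n\stackrel{\cL^2_w}{\rightsquigarrow} X$ from the deterministic continuity of the inclusion $(D,\mathcal{M}_1)\hookrightarrow(\cL^2_{loc},\cL^2_w)$ recorded in Remark \ref{rem:DinsideL2}, and obtains $|X_n|\stackrel{\mathcal{C}}{\rightsquigarrow}|X|$ by combining the resulting f.d.d.\ convergence of the nondecreasing processes $|X_n|$ with Proposition \ref{prop:Cincreasing}; your shortcut via the reverse triangle inequality $\sup_{t\le T}\bigl|\,|x_n|_t-|x|_t\bigr|\le|x_n-x|_T$ and the continuous mapping theorem is an acceptable, slightly more direct, variant of the same idea.

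The third assertion is where your argument breaks down, and you have in fact identified exactly the right difficulty before mis-resolving it. The Skorokhod representation theorem produces $\tilde X_n\stackrel{Law}{=}X_n$ and $\tilde X\stackrel{Law}{=}X$ with $\tilde X_n\to\tilde X$ almost surely in $\mathcal{M}_1$, but it gives no control over the \emph{joint} law of the pair $(\tilde X_n,\tilde X)$, which in general differs from that of $(X_n,X)$. Since $|X_n-X|$ is a functional of the pair, the almost sure convergence $|\tilde X_n-\tilde X|_T\to0$ transfers nothing back to the original space; the phrase ``transfers back to the original processes'' is precisely the invalid step. Indeed, under the stated hypothesis read literally (convergence in distribution, as defined in Appendix \ref{app:remvail}) the conclusion can fail: take $X$ a standard Brownian motion and $X_n=-X$ for every $n$, so that $X_n\stackrel{Law}{=}X$ and hence $X_n\stackrel{\mathcal{M}_1}{\rightsquigarrow}X$ trivially, yet $|X_n-X|_t=2|X|_t\not\to0$. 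What is actually needed is joint convergence $(X_n,X)\rightsquigarrow(X,X)$ --- equivalently, convergence of $X_n$ to $X$ in probability in the $\mathcal{M}_1$ topology --- after which the deterministic fact that $\cL^2_w$-convergence together with convergence of the norms implies strong $\cL^2[0,T]$-convergence (end of Remark \ref{rem:DinsideL2}) finishes the argument. To be fair, the paper's own justification, the closing sentence ``as a result'' of Remark \ref{rem:L2normtightness}, is equally terse and tacitly relies on such a coupling; but your proposed repair via Skorokhod representation does not close this gap, and under the literal distributional hypothesis no argument could.
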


One of the main reasons for choosing the $\cL^2_w$-topology is the following theorem stating sufficient conditions
for the $\cL^2_w$-continuity of the composition mapping for sequences of $D$-valued processes.

\begin{thm}\label{thm:comp}
Suppose $X$, $Y$, $X_n$, $Y_n$ are $D$-valued processes such that
$X_n\stackrel{\cC}{\rightsquigarrow} X$ and $(X_n,Y_n) \stackrel{f.d.d.}{\rightsquigarrow} (X,Y)$ hold,
with $Y_n$, $Y$ non-negative, non-decreasing such that $Y_n(t) \to\infty$ as $t\to\infty$ almost surely
and either of the following hold:
a) $Y_n \stackrel{\cM_1}{\rightsquigarrow} Y$ with $Y_n^{-1}(0)=0$ for every $n$; or
b) $Y_n^{-1} \stackrel{\cM_1}{\rightsquigarrow} Y^{-1}$ with $Y_n(0)=0$ for every $n$.
Then $X_n\circ Y_n \stackrel{\cL^2_w}{\rightsquigarrow} X\circ Y$.
\end{thm}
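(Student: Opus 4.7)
My plan is to verify the two conditions of Theorem~\ref{thm:L2characterization} applied to $Z_n:=X_n\circ Y_n$, namely tightness of $\bigl\{|Z_n|_T;\,n\ge 1\bigr\}$ for each $T>0$, and f.d.d.\ convergence of $\cI(Z_n)$ to $\cI(X\circ Y)$. As preparation I would first upgrade the hypotheses to joint weak convergence of $(X_n,Y_n)$ in the product topology (using $\cC$ on the first coordinate and $\cM_1$ on the second in case~(a), and analogously in case~(b)): marginal tightness yields joint tightness on the Polish product space, and the prescribed joint f.d.d.\ pin down the limit law uniquely. A Skorohod representation then supplies versions on which $X_n\to X$ uniformly on compacts almost surely, together with $Y_n\to Y$ or $Y_n^{-1}\to Y^{-1}$ in $\cM_1$ almost surely. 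Since $\cC$-convergence of càdlàg processes forces the limit $X$ to be continuous, I henceforth treat $X$ as continuous a.s.

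For tightness, monotonicity of $Y_n$ provides the pointwise bound $|X_n(Y_n(s))|\le\sup_{u\in[0,Y_n(T)]}|X_n(u)|$ for $s\in[0,T]$. The sequence $\{Y_n(T)\}$ is tight (directly from $\cM_1$-convergence of $Y_n$ in case~(a), or from $\cM_1$-tightness of $Y_n^{-1}$ combined with the standing assumption $Y_n(t)\to\infty$ in case~(b)), and $\cC$-tightness of $X_n$ yields tightness of the supremum, giving (i); on the Skorohod space the same bound furnishes an almost-sure dominating function. For the f.d.d.\ statement, I would establish the pointwise convergence $X_n(Y_n(s))\to X(Y(s))$ at Lebesgue-a.e.\ $s$ and then invoke dominated convergence to obtain $\cI(Z_n)(t)\to\cI(X\circ Y)(t)$ a.s.\ for every $t>0$. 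At a continuity point $s$ of $Y$, $\cM_1$-convergence gives $Y_n(s)\to Y(s)$, and uniform convergence of $X_n$ to the continuous $X$ on a compact neighbourhood of $Y(s)$ then yields $X_n(Y_n(s))\to X(Y(s))$; since $Y$ has at most countably many jumps, this covers a.e.\ $s$.

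The main obstacle is case~(b), where only $Y_n^{-1}\stackrel{\cM_1}{\rightsquigarrow}Y^{-1}$ is assumed and one must still extract $Y_n(s)\to Y(s)$ at continuity points of $Y$. I would resolve this via $\cM_1$-continuity of the inverse map (cf.\ Proposition~\ref{prop:Whitt_composition} and the discussion in Appendix~\ref{app:remvail}): under the standing hypotheses $Y_n(0)=0$, $Y_n(t)\to\infty$, and $\cM_1$-convergence of $Y_n^{-1}$ to $Y^{-1}$, this delivers $Y_n\stackrel{\cM_1}{\rightsquigarrow}Y$ and reduces case~(b) to the situation already treated in case~(a).
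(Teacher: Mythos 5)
Your proof is correct, but it follows a genuinely different route from the paper's. The paper never invokes a Skorohod representation: it rewrites $\cI(X_n\circ Y_n)(t)=Z_n\circ Y_n(t)$ with $Z_n(t)=\int_0^t X_n\,dV_n$ and $V_n=Y_n^{-1}$, uses the $\cC$-convergence of $X_n$ to replace $X_n$ by a step function $X_{n,m}$ along a fixed partition (controlling the error by $\sqrt{V_n(M)}\,\delta_{n,m}$), deduces $Z_{n,m}\stackrel{f.d.d.}{\rightsquigarrow}Z_{\infty,m}$ from $(X_n,Y_n,V_n)\stackrel{f.d.d.}{\rightsquigarrow}(X,Y,V)$, and then recomposes with $Y_n$ using continuity points of the law of $Y(T)$. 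This is why the paper funnels both alternatives (a) and (b) through $V_n\stackrel{\cM_1}{\rightsquigarrow}V$ via Proposition \ref{prop:inversecontinuity}, and why the same discretization template can be reused later (end of Section \ref{ssec:occupation_times} and the proof of Theorem \ref{thm:clt_mart}) where a global coupling would be awkward. Your argument---upgrade to joint weak convergence in the product topology $(D,\cJ_1)\times(D,\cM_1)$ (legitimate: marginal tightness gives joint tightness, and f.d.d.'s on a dense set of times identify the limit law), take an a.s.\ Skorohod version, prove $X_n(Y_n(s))\to X(Y(s))$ at every continuity point $s$ of $Y$ via Whitt's local-uniform-convergence property of $\cM_1$ limits at continuity points, and conclude by dominated convergence---is shorter and more transparent for this particular theorem. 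It also reveals that in case (a) the side condition $Y_n^{-1}(0)=0$ is not actually used by your argument, whereas the paper needs it to get $V_n\stackrel{\cM_1}{\rightsquigarrow}V$; in case (b) you do still need the $\cM_1$-continuity of the inverse map on $D_\uparrow^0$ to recover $Y_n\stackrel{\cM_1}{\rightsquigarrow}Y$, and the correct reference for that is Proposition \ref{prop:inversecontinuity}, not Proposition \ref{prop:Whitt_composition} as you cite. Both proofs establish condition (i) of Theorem \ref{thm:L2characterization} by the same bound $|X_n\circ Y_n|_T\le\sqrt{T}\,|X_n|_{\infty,Y_n(T)}$ together with tightness of $Y_n(T)$.
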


The proof of the theorem is relegated to Appendix \ref{pf:comp}. Note that even in the $\cM_1$-topology, the composition mapping is not continuous in general, even when $X$ is a Brownian motion; see, e.g., Proposition \ref{prop:Whitt_composition}.


\section{$D$-valued martingales and a CLT in the $\cL^2_w$-topology}\label{sec:m1_clt}

Suppose that $M_n$ is a sequence of $D$-valued square integrable $\mathbb{F}$-martingales started at $M_n(0)=0$.
By square integrable martingale we mean those satisfying $E\{M^2(t)\}<\infty$ for every $t\ge0$.
Note that because of possible discontinuities of trajectories, the quadratic variation $[M_n]$ can be distinct from its predictable compensator
$A_n:=\crochet{ M_n }$. The largest jump is denoted by $J_T(M_n) := \sup_{s\in [0,T]}|M_n(s)-M_n(s-)|$.
Some useful assumptions are formulated next.
\begin{hyp}\label{hyp:an_unbounded}
All of the following hold:
\begin{enumerate}
\item[(a)] $A_n(t) \to\infty$ as $t\to\infty$ almost surely, for each fixed $n\ge1$;
\item[(b)] There is a $D$-valued process $A$ such that \\
(i) $A_n \stackrel{f.d.d.}{\rightsquigarrow} A$; \\
(ii) For all $t\ge 0$, $\disp \lim_{n\to\infty} E\left\{A_n(t)\right\}=E\left\{A(t)\right\} < \infty$; \\
(iii) $A(t) \to\infty$ as $t\to\infty$ almost surely.
\end{enumerate}
\end{hyp}
Writing the inverse process for $A_n$ as $\tau_n(s) = \inf\{t\ge0; A_n(t)>s\}$,
one defines the rescaled $\mathbb{F}_{\tau_n}$-martingale $W_n = M_n\circ \tau_n$,
with compensator $\crochet{ W_n } := A_n\circ \tau_n$.
\begin{hyp}\label{hyp:bm_limit}
\BR{$J_t(W_n)\stackrel{Law}{\rightsquigarrow} 0$ as $n\to\infty$} and
$\disp \lim_{n\to\infty}E\{\langle W_n\rangle_t\}  =t$, for all $t\ge0$.
\end{hyp}
\begin{rem}
By the right-continuity of $A_n$, $A_n\circ \tau_n(t)\ge t$ holds; therefore,
$\lim_{n\to\infty} E\crochet{ W_n }_t  =t$ is equivalent to $\crochet{ W_n }_t \stackrel{L^1}{\rightsquigarrow} t$.
Hypothesis \ref{hyp:bm_limit} yields $\crochet{ W_n }_t \stackrel{Law}{\rightsquigarrow} t$ for any $t\ge 0$;
by Proposition \ref{prop:Cincreasing}, $\crochet{ W_n }$ is $\mathcal{C}$-tight.
\end{rem}
An inhomogeneous L\'evy process $A$ is an $\mathbb{F}$-adapted $D$-valued process with independent increments (with respect to
$\mathbb{F}$), without fixed times of discontinuity and such that $A_0=0\in\dR$. We assume also here that $A$ has deterministic characteristics (in the sense of the L\'evy-Khintchine formula), so all L\'evy processes here are also semimartingales ---
see \citet[Theorem II.4.15]{Jacod/Shiryaev:2003}. The inhomogeneity (in time) means that stationarity of the increments, usually required of L\'evy processes, is lifted. This choice reflects the existence of weak limits exhibiting independent increments without homogeneity in time or space, in some applications. All processes in the present section are built on $D$, ensuring continuity in probability of the trajectories, another usual requirement.

\begin{hyp}\label{hyp:disc_disc1}
There is a $D$-valued process $A$ started at $A(0)=0$, such that
\begin{enumerate}
\item[(a)] $\tau(s) = \inf\{t\ge0; A(t)>s\}$ is an inhomogeneous L\'evy process;
\item[(b)] Either of the following conditions hold: \\
(i) $A_n \stackrel{f.d.d.}{\rightsquigarrow} A$ and for every $n$, $\tau_n(0)=0$; \\
(ii) $\tau_n \stackrel{f.d.d.}{\rightsquigarrow} \tau$ and for every $n$, $A_n(0)=0$.
\end{enumerate}
\end{hyp}

\begin{rem}\label{rem:jumps_tau}
By Proposition \ref{prop:inversecontinuity} and Remark \ref{rem:M1increasing}, Hypothesis \ref{hyp:disc_disc1}.b(i) implies both
$A_n \stackrel{\mathcal{M}_1}{\rightsquigarrow} A$ and the $\mathcal{M}_1$-continuity of mapping $A_n\mapsto\tau_n$,
so $\tau_n \stackrel{\mathcal{M}_1}{\rightsquigarrow} \tau$ as well.
The reverse argument holds under Hypothesis \ref{hyp:disc_disc1}.b(ii); therefore, Hypothesis \ref{hyp:disc_disc1} automatically
implies both $\tau_n \stackrel{\mathcal{M}_1}{\rightsquigarrow} \tau$ and $A_n \stackrel{\mathcal{M}_1}{\rightsquigarrow} A$.
Invoking Remark \ref{rem:M1increasing} again confirms $(A_n,\tau_n) \stackrel{\mathcal{M}_1}{\rightsquigarrow} (A,\tau)$ as well,
since the mapping $A\mapsto(A,\tau)$ is also $\mathcal{M}_1$-continuous, under condition (i); and similarly for mapping
$\tau\mapsto(A,\tau)$, under condition (ii).
\end{rem}

Our main result is the following CLT, proven in Appendix \ref{pf:clt_mart}.

\begin{thm}\label{thm:clt_mart}
Assume that both Hypotheses \ref{hyp:an_unbounded} and \ref{hyp:bm_limit} hold.
Assume that either: a) Hypothesis \ref{hyp:disc_disc1} holds;
or b) $A$ is an inhomogeneous L\'evy process and $\tau_n(0)=0$, for every $n$.
Then there holds $W_n \stackrel{\mathcal{C}}{\rightsquigarrow} W$ and
$(M_n,A_n,W_n) \stackrel{\cL^2_w}{\rightsquigarrow} (M,A,W)$,
with $M = W\circ A$ and $W$ is a Brownian motion, independent of process $A$.
\end{thm}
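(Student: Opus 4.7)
The plan is to proceed in three stages. First, I would show $W_n \stackrel{\mathcal{C}}{\rightsquigarrow} W$, where $W$ is standard Brownian motion. The rescaled process $W_n = M_n \circ \tau_n$ is an $\mathbb{F}_{\tau_n}$-martingale with predictable quadratic variation $\crochet{W_n}_t = A_n(\tau_n(t)) \ge t$; Hypothesis \ref{hyp:bm_limit} yields $\crochet{W_n}_t \to t$ in $L^1$ and hence in probability, and the monotonicity of the processes combined with Proposition \ref{prop:Cincreasing} upgrades this to $\crochet{W_n}\stackrel{\mathcal{C}}{\rightsquigarrow}(t\mapsto t)$. To apply a Rebolledo-type martingale CLT, I would next control $J_T(W_n)$: using the identity $\crochet{W_n}_{A_n(u-)} - A_n(u-) = \Delta A_n(u)$, the uniform decay in $L^1$ of the overshoot $A_n(\tau_n(\cdot))-(\cdot)$ forces $\sup_{s\le T}\Delta\crochet{W_n}_s \to 0$ in probability, from which the predictable-time estimate $E\{(\Delta M_n(u))^2 \mid \cF_{u-}\} = \Delta A_n(u)$ yields $J_T(W_n)\to 0$ in probability. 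The martingale CLT then gives $W_n\stackrel{\mathcal{J}_1}{\rightsquigarrow} W$, improved to $\mathcal{C}$-convergence since $W$ is continuous.

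Second, I would establish the independence of $W$ and $A$ in the limit. Under Hypothesis \ref{hyp:disc_disc1}, Remark \ref{rem:jumps_tau} supplies $A_n\stackrel{\mathcal{M}_1}{\rightsquigarrow} A$ jointly with $\tau_n \stackrel{\mathcal{M}_1}{\rightsquigarrow}\tau$, Proposition \ref{prop:norm} converting these into $\cL^2_w$-convergences; under the alternative (b), the inhomogeneous-Lévy assumption on $A$ (giving independent increments and continuity in probability) combined with Hypothesis \ref{hyp:an_unbounded} delivers the analogous Skorohod convergence. To obtain independence of $W$ from $A$, I would fix a horizon $T$ and bounded continuous functionals $\varphi$ and $\psi$ depending on $A$ and $W$ respectively through finitely many times in $[0,T]$, then condition on $\cF_{\tau_n(T)}$, at which point the relevant $A_n$-coordinates are measurable while the martingale property of $W_n$ on the shifted filtration together with Stage one applied to shifted increments makes $E\{\psi(W_n)\mid \cF_{\tau_n(T)}\}$ tend to the unconditional $E\{\psi(W)\}$. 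The resulting factorization $E\{\varphi(A_n)\psi(W_n)\}\to E\{\varphi(A)\}\,E\{\psi(W)\}$ is exactly the required asymptotic independence.

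Third, I would assemble the joint $\cL^2_w$-convergence. Theorem \ref{thm:comp} applies with $X_n=W_n$, $Y_n=A_n$: the hypotheses $W_n\stackrel{\mathcal{C}}{\rightsquigarrow} W$ (from Stage one) and $(W_n,A_n)\stackrel{f.d.d.}{\rightsquigarrow}(W,A)$ (from Stage two), combined with either case of Hypothesis \ref{hyp:disc_disc1} matched to the two alternatives in Theorem \ref{thm:comp}, yield $W_n\circ A_n\stackrel{\cL^2_w}{\rightsquigarrow} W\circ A = M$. Since $W_n\circ A_n$ and $M_n$ differ only on flat segments of $A_n$, which are Lebesgue-negligible and hence invisible to the $\cL^2_w$-topology, it follows that $M_n \stackrel{\cL^2_w}{\rightsquigarrow} M$. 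Marginal $\cL^2_w$-convergences of $A_n$ and $W_n$ are delivered by Proposition \ref{prop:norm}, and joint $\cL^2_w$-convergence of the triple $(M_n,A_n,W_n)$ follows from the triple version of Lemma \ref{lem:lem_pairs} available via Remark \ref{rem:RdExtension2}, whose f.d.d.\ hypothesis on the integrated processes is inherited from the joint distributional limits obtained in Stages one and two.

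The hardest step will be Stage two. The martingale CLT produces the Brownian limit only marginally; lifting to joint convergence with a genuine independence of $W$ from $A$ is a new statement that must exploit the martingale structure of $W_n$ relative to the time-changed filtration and requires careful conditioning at the diverging stopping times $\tau_n(T)$, with an approximation argument to reduce general functionals to those supported on the $\mathcal{F}_{\tau_n(T)}$-past. Stages one and three then weave this independence into the final $\cL^2_w$-convergence statement through Theorem \ref{thm:comp} and the triple extension of Lemma \ref{lem:lem_pairs}.
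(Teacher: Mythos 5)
Your Stages one and three essentially retrace the paper's route: the paper also first applies a martingale CLT (its Theorem \ref{thm:jumps_vanish}, applied to $W_n=M_n\circ\tau_n$) to get $(W_n,\crochet{W_n})\stackrel{\mathcal{C}}{\rightsquigarrow}(W,t)$, and then assembles the triple via Theorem \ref{thm:comp}, the discretization of $\cI(A_n)$ and $\cI(W_n)$, and Lemma \ref{lem:lem_pairs} with Remark \ref{rem:RdExtension2}. Your overshoot argument for $J_T(W_n)\to0$ is a reasonable (if slightly informal at totally inaccessible jump times) substitute for the paper's appeal to its companion result.

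The genuine gap is Stage two. Your conditioning argument does not establish the independence of $W$ and $A$, for a concrete reason: a functional $\psi$ of $W_n$ through times in $[0,T']$ depends on $M_n$ up to the stopping time $\tau_n(T')$, so it is itself (asymptotically) $\cF_{\tau_n(T')}$-measurable; conditioning on $\cF_{\tau_n(T)}$ therefore does not decouple $\psi(W_n)$ from the $A_n$-coordinates, because the information generating $A_n$ on $[0,T]$ (original clock, revealed by time $T$, i.e.\ by $W_n$-clock time $A_n(T)$) is interleaved with the information generating $W_n$ on $[0,T']$. Only increments of $W_n$ \emph{after} the conditioning time are asymptotically independent of the past, and that is not enough to factorize $E\{\varphi(A_n)\psi(W_n)\}$. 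A telling symptom is that your Stage two never actually uses the hypothesis that $\tau$ (or $A$) is an inhomogeneous L\'evy process except to obtain convergence; if your argument worked it would prove independence of $W$ and $A$ with no structural assumption on the limit, which is false in general. The paper obtains independence at the level of the \emph{limit} objects via Lemma \ref{lem:main}: the limiting $\tau$ is a non-decreasing (hence finite-variation, no Brownian component) inhomogeneous L\'evy process adapted to the same filtration as the limiting Brownian motion $W$, and the vanishing of the covariation between the continuous square-integrable martingale $e^{i\theta_1 W_t+\theta_1^2t/2}$ and the finite-variation martingale $e^{i\theta_2\tau_t-\Psi_t(\theta_2)}$ forces $E[e^{i\theta_1 W_t+i\theta_2\tau_t}]$ to factorize; independence of increments then upgrades this to independence of the processes, and the $\cM_1$-continuity of $\tau\mapsto A$ (using $A(0)=0$, respectively $\tau_n(0)=0$) transfers it to $A$. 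Some such use of the L\'evy structure is indispensable, and your proposal is missing it.
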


\section{Examples of application}\label{sec:ex}

\subsection{Price approximation and random sums}\label{ssec:random_sums}

Following \cite{Chavez-Casillas/Elliott/Remillard/Swishchuk:2019,Swishchuk/Remillard/Elliott/Chavez-Casillas:2019,Guo/Remillard/Swishchuk:2020}
who studied limit order books, the price structure on markets can be written as $X_t = \sum_{k=1}^{N_t} V_0(\xi_k)$, where $\xi = (\xi_k)_{k\ge 1}$ is a finite Markov chain independent of a counting process $N$. Here, $N_t$ can be seen as the number of orders executed on a stock up to time $t$. If $\mu$ is the mean of $V(\xi)$ under the stationary measure $\pi$, then $X_t = \sum_{k=1}^{N_t} V(\xi_k)  + \mu N_t$, where $V(x) = V_0(x)-\mu$.
Next, according to \cite[Theorem 7.7.2 and Example 7.7.2]{Durrett:1996}, one can find $f$ bounded so that $ Tf-f = Lf = -V$ and $V(\xi_k) = Y_k+ Z_k-Z_{k+1}$, with $Z_k = Tf(\xi_{k-1})$ and $Y_k = f(\xi_k)-Tf(\xi_{k-1})$ is a $L^2$ ergodic martingale difference. As a result,
$\disp
\sum_{k=1}^n V(\xi_k) = \sum_{k=1}^n Y_k + Z_1-Z_{n+1}$.
Then, if $\sigma^2 = E(Y_k^2)$, it follows that   $\cW_n(t) = \dfrac{1}{\sigma \sn}\sum_{k=1}^ \nt Y_k $ converges in $\cC$ to a Brownian motion $\cW$.
Next, assume that  $A_n(t) = N_{a_nt}/n$ converges in $\cM_1$ to $A(t)$. Then,
$M_n(t) = \dfrac{1}{n^{1/2}}\sum_{k=1}^{N_{a_n t}}Y_k = \sigma \cW_{n}\circ A_n(t) $ converges to $\sigma \cW\circ A_t$ f.d.d., but the convergence in not necessarily with respect to $\cM_1$. It then follows that
$\disp
\frac{X_{a_nt}-\mu n A_n(t) }{n^{1/2}}  = \sigma \cW_{n}\circ A_n(t) +o_P(1)$.
This can be written as
\begin{equation}\label{eq:ane}
X_{a_n t} =   n^{1/2} \sigma \cW_{n}\circ A_n(t) + n \mu A_n(t) +o_P(n^{1/2}) .
\end{equation}
Processes of the form  $Y = \sigma W\circ A+ \mu A$,  where $A$ is a non-negative stochastic process independent of the Brownian motion $W$ have been proposed by \citet{Ane/Geman:2000} to model the behaviour of stock returns. A particular case is the famous Variance Gamma model \citep{Madan/Carr/Chang:1998}, where $A$ is a Gamma process. It follows that these processes can appear as limiting cases of \eqref{eq:ane}.
If in addition, $A$ is deterministic, continuous, and
$\disp  \dA_n(t) = \sn\left\{A_n(t)-A(t)\right\} = \dfrac{N_{a_n t}- n A(t)}{n^{1/2}}  \stackrel{\cJ_1}{\rightsquigarrow} \dA(t)$,
then
$$
n^{1/2}\left\{ \dfrac{ X_{a_n t} }{n}-\mu A(t)\right\}  = \sigma \cW_n\circ A_n(t)  + \mu \dA_n(t)+ o_P(1).
$$
In most cases, $\dA$ will be a Brownian motion (up to a constant), independent of $\cW$.  In what follows, we assume that $N_t$ be a renewal counting process defined by $\{N_t \ge k\} = \{S_k \le t\}$, where  $S_k = \sum_{j=1}^k \tau_k$, and the $\tau_k$s are iid positive random variables. The asymptotic behaviour of $N$ is determined by the asymptotic behaviour of $S$.

\subsubsection{First case: $S_n$ has finite variance}
Suppose $\tau_k$ has mean $c_1$ and variance $\sigma_1^2$. Then, taking $a_n=n$, one gets that $A(t) = \dfrac{t}{c_1}$, $\dA = -\dfrac{\sigma_1}{c_1^{3/2}}\dW$, where $\dW$ is a Brownian motion independent of $\cW$.
It then follows from \citet[Theorem 2.1]{Remillard/Vaillancourt:2024a} (stated as Theorem \ref{thm:jumps_vanish} here) that
$n^{1/2}\left\{\dfrac{X_{a_n t}}{n}- t\dfrac{\mu}{c_1} \right\}$ converges in $\cC$ to $\tilde\sigma \tilde \cW(t)$, where $\tilde \cW$ is a Brownian motion and $\tilde\sigma = \left(\dfrac{\sigma^2}{c_1}+ \dfrac{\mu^2 \sigma_1^2}{c_1^3}\right)^{1/2}$. This corrects formula (16) in \cite{Chavez-Casillas/Elliott/Remillard/Swishchuk:2019}, where the factor $\sigma_1^2$ is missing. This is also Theorem 7.4.1 in \cite{Whitt:2002}.

\subsubsection{Second case: $S_n$ is in the domain of attraction of a stable process of order $\alpha<1$}
Setting $a_n=n^{1/\alpha}$, assume that  $\cS_n(t) = S_{\lfloor n t\rfloor}/a_n$  converges in $\cJ_1$ to $\cS(t)$.
It follows that $A_n(t) = \dfrac{N_{n^{1/\alpha} t}}{n}$ converges in $\cM_1$ to $A(t) = S^{-1}(t)$ since
$S_n^{-1}(t) = \dfrac{1+N_{n^{1/\alpha} t}}{n}$. Then,
$M_n(t) = \dfrac{1}{\sigma n^{1/2}}\sum_{k=1}^{N_{a_nt}}V(\xi_k) = \cW_{n}\left(N_{a_n t}/n\right)$
converges to $\cW\circ A_t$ f.d.d., but the convergence in not with respect to $\cM_1$.
\BR{Since $A_n$ is not continuous, Proposition \ref{prop:Whitt_composition} is not applicable to prove
$\cW_{n}\circ A_n \stackrel{\cM_1}{\not\rightsquigarrow}  \cW\circ A$.
The linear interpolate $\tilde A_n$ between successive values of $A_n$, which has strictly increasing and continuous trajectories,
satisfies $A_n\le \tilde A_n\le S_n^{-1}$ and $\tilde A_n\stackrel{\cM_1}{\rightsquigarrow} A$.
Proposition \ref{prop:Whitt_composition} implies $\cW_{n}\circ \tilde A_n \stackrel{\cM_1}{\not\rightsquigarrow}  \cW\circ A$.
Since $\tilde A_n - A_n \stackrel{\mathcal{C}}{\rightsquigarrow} 0$,
$\cW_{n}\circ \tilde A_n - \cW_{n}\circ A_n \stackrel{\mathcal{C}}{\rightsquigarrow} 0$ and
$\cW_{n}\circ A_n \stackrel{\cM_1}{\not\rightsquigarrow}  \cW\circ A$ both ensue.}
However,  it follows from Theorem \ref{thm:comp} that $\cW_n\circ A_n \stackrel{\cL^2_w}{\rightsquigarrow}\cW\circ A$.
Note that it also follows that $\dfrac{X_{a_n t}}{n} \stackrel{\cM_1}{\rightsquigarrow} \mu A(t)$.

\begin{rem}\label{rem:error} There have been many incorrect statements about convergence of $A_n$ in the literature. For example, Corollary 3.4 of \cite{Meerschaert/Scheffler:2004} about the $\cJ_1$-convergence of $A_n$  is not true. The convergence is in $\cM_1$, not $\cJ_1$. The authors quoted an erroneous result in Theorem 3 of \cite{Bingham:1971}, where the error in the proof of the latter result comes from the incorrect inequality in Equation (9) in \cite{Bingham:1971}. As stated in that article,
while it is true that
$\frac{1}{2}\omega_\dJ\left(x,\frac{T}{2k},T\right) \le D(x,k,T)$, where $\disp D(x,k,T) = \max_{1\le r\le k}|x(rT/k)-x((r-1)T/k)|$, it is not true that $D(x,k,T) \le \omega_\dJ\left(x,\frac{T}{k},T\right)$ as claimed, even if $x$ is monotone. If $X\in D$, then $\limsup_{k\to\infty}P(D(k,x,T)>\epsilon) >0$ unless $P(X\in C)=1$. In fact, for any $x\in D$,
$\disp \limsup_{k\to\infty}D(k,x,T) =\sup_{0<t\le T}|\Delta x(t)|$, and if $x$ is nondecreasing, then
$$
\omega_\dJ\left(x,\delta,T\right) \le \sup_{\delta\le t \le T-\delta}\min\{x(t)-x(t-\delta),x(t+\delta)-x(t)\}.
$$
Next, Theorem 4.2 in \cite{Meerschaert/Scheffler:2004}, also quoted in  \cite{Scalas/Viles:2012,Scalas/Viles:2014}, is also incorrect. The authors concluded
 that since the jumps of $\cW$ and $\cS$ are not simultaneous by independence, it follows that $A$ is strictly increasing at points on discontinuity of $\cW$, then $\cW_n\circ A_n \stackrel{\cJ_1}{\rightsquigarrow}\cW\circ A$.
 However, this is an incorrect use of  \citep[Theorem 13.2.4]{Whitt:2002}.
 One cannot even conclude that $\cW_n\circ A_n \stackrel{\cM_1}{\rightsquigarrow}\cW\circ A$.
\end{rem}

\subsubsection{Third case: $S_n$ is in the domain of attraction of a stable process of order $\alpha=1$}
Next, if  $\disp \lim_{x\to\infty}xP(\tau_1>x)\to c_1\in (0,\infty)$, then $S_\nt/n\log{n} \to c_1$ and $N_{nt}/\left(n/\log{n}\right)\to t/c_1$, using   $E\left(e^{-s\tau_1}\right) = 1 + c_1 s\log{s} + o(s\log{s})$, as $s\to 0$, from the proof of \cite[Proposition A.1]{Chavez-Casillas/Elliott/Remillard/Swishchuk:2019}.
In particular, if $a_n = n\log{n}$, then $N_{a_n t}/n$ converges in probability to $\frac{t}{c_1}$. If in addition  $E\left(e^{-s\tau_1}\right) = 1 + c_1 s\log{s}+c_2 s + o(s)$, as $s\to 0$, then  $S_n$ is in the domain of attraction of a stable law of order 1, since
$\frac{S_{\lfloor nt \rfloor}}{n}-c_1 t \log{n} \rightsquigarrow c_1 \cS_t - c_2 t $ in $\cJ_1$,
 where $\cS$ is a stable process of index $1$.
 This clarifies conditions under which the last case in  \cite[Section 4.2.2]{Chavez-Casillas/Elliott/Remillard/Swishchuk:2019} holds true.
 As a result, as $n\to\infty$, $E\left\{e^{-s\cS_n(t)}\right\}\to e^{t\left(c_1 s\log{s}+c_2 s\right)}$, so
$\disp
E\left\{e^{is \cS(t)}\right\} = e^{-ic_2 st +tc_1 \psi(s)}$,
where $\psi(s) = -|s|\left\{\dfrac{\pi}{2} +i{\rm sgn}(s)\log{|s|})\right\}$. See Formula (3.19) in \cite{Feller:1971}. Hence,
$\disp
\log{n}\left\{ \dfrac{X_{a_n t}}{n}-\mu t/c_1\right\} \stackrel{\cM_1}{\rightsquigarrow} -\mu\left(\cS_t -  t \dfrac{c_2}{c_1}\right)$,
since $\log{n}\left\{A_n(t) - t/c_1\right\} \stackrel{\cM_1}{\rightsquigarrow} t \dfrac{c_2}{c_1}-\cS_t $.

\subsubsection{Fourth case: $S_n$ is in the domain of attraction of a stable process of order $\alpha\in (1,2)$}
Finally,  if $1<\alpha<2$, then we have the following result \citep[Theorem 7.4.3]{Whitt:2002}: setting $a_n=n$, one gets
$$
n^{1-1/\alpha} \left(\dfrac{X_{nt}}{n} -t   \dfrac{\mu}{\mu_1}\right) = n^{-\left(\frac{1}{\alpha}-\frac{1}{2}\right)}\cW_n\circ A_n(t) + \mu n^{1-1/\alpha}\left(A_n(t)-\frac{t}{\mu_1}\right)\stackrel{\cM_1}{\rightsquigarrow} - \mu \dfrac{\cS_t}{\mu_1^{1+1/\alpha}},
$$
since $\cS_n(t) = n^{-1/\alpha}\left( S_{\lfloor nt\rfloor} - n t \mu_1\right)\stackrel{\cJ_1}{\rightsquigarrow} \cS_t$, where $\mu_1 = E(\tau_1)$. Note that $\cS_t$ has a stable distribution normalized so that
$\disp \lim_{x\to\infty} x^\alpha P(\tau_1> x) = \lim_{x\to\infty} x^\alpha P(\cS_1> x)= c_1$.


\subsection{Occupation times for planar Brownian motion}\label{ssec:occupation_times}

The main results of this section are: a complete proof of Theorem 3.1 in  \citet{Kasahara/Kotani:1979}
and a correction to their Theorem 3.2.\\

Let $B$ be a planar Brownian motion and let $V: \dR^2 \mapsto \dR$ be continuous with compact support
and set $\bar V= \int_{\dR^2}V(x)dx$. Then
$F(x) = \pi^{-1}\int_{\dR^2}V(y) \log|y-x|dy$ is bounded and
$
M_t = F(B_t) - \int_0^t V(B_s)ds$ is a martingale with $[M]_t = \int_0^t |\nabla F(B_s)|^2ds$
by It\^o's formula, since $\dfrac{1}{2}\Delta F = V$ holds \citep[Theorems 6.21 and 10.2]{Lieb/Loss:2001}.
So, under rescaling, the martingale $M$ and the occupation time $\int_0^t V(B_s)ds$ have the same asymptotic behavior.
Hypothesis \ref{hyp:an_unbounded}.a holds, provided $V$ is not identically null, since $\nabla F$ is continuous
and also not everywhere null, entailing $[M]_\infty = \infty$. Since $M$ is continuous,
Hypothesis \ref{hyp:bm_limit} is also satisfied for all rescalings.
Let $m(t)$ denote a numerical function that goes to $\infty$ with $t$, and define the continuous martingale
$\tilde M_n(t) =\{\log{m(n)}\}^{-1/2} \int_0^{m(nt)}\nabla F(B_s)\cdot dB_s$ with quadratic variation
$\tilde A_n(t)=[\tilde M_n]_t =\{\log{m(n)}\}^{-1} \int_0^{m(nt)} |\nabla F(B_s)|^2ds$.
The common formulations involve rescalings $m(t)=t$ and $m(t)=te^{2t}$ (ensuring that $\tilde A_n(0)=0$).

\subsubsection{First case: $m(t)=t$}
Here, removing the superscript on $\tilde A_n$ in order to avoid confusion for the reader
between the treatment of the two rescaled sequences,
$A_n(t) = \disp \{\log{n}\}^{-1} \int_0^{nt} |\nabla F(B_s)|^2ds$ converges f.d.d. to $A$, where $A(0)=0$ and
$A(t) =A(1) \stackrel{Law}{=} \disp \dfrac{\cE}{2\pi}\int_{\dR^2} |\nabla F(x)|^2 dx  = \dfrac{\cE}{2\pi}c_V$ for every $t>0$,
where ${\cE}$ is an Exponential random variable with mean $1$, and where
 \begin{equation}\label{cVisnablaF}
c_V := -\frac{2}{\pi}\int\int_{\dR^4} V(x)V(y)\log|x-y| dx dy = \int_{\dR^2} |\nabla F(x)|^2 dx.
\end{equation}
The proof of \eqref{cVisnablaF} is given in \ref{pf:cVisnablaF}.
The proof of $A_n \stackrel{f.d.d.}{\rightsquigarrow} A$ follows from the fact that for any $0< s \le t$,
$A_n(t)-A_n(s) \stackrel{Law}{\rightsquigarrow} 0$, since $0\le EA_n(t)-EA_n(s)\to 0$, using \cite{Darling/Kac:1957}.
As a result,  $A$ is clearly not right-continuous at $0$ and therefore does not belong to $D$.
It also fails to verify another requirement of Theorem \ref{thm:clt_mart}, namely $A(t)\to\infty$ as $t\to\infty$
in Hypothesis \ref{hyp:an_unbounded}.

\begin{rem}
Proving the convergence in distribution of $A_n(t)$ was achieved in stages. \citet{Kallianpur/Robbins:1953} claimed that $[M]_t/\log{t}$ converges in law to an Exponential distribution. In fact they never proved this result. They refer to \citet{Robbins:1953},
where a result on sums of independent random variables on the plane is claimed but not proved.
\citet{Kallianpur/Robbins:1953} actually appeared before \citet{Robbins:1953}.
It seems that the result claimed by \citet{Robbins:1953} was in fact proven in \citet{Darling/Kac:1957},
while \citet{Erdos/Taylor:1960} proved a special case involving the number of visits at $0$ for symmetric random walks.
In fact, \cite{Darling/Kac:1957},  when $V$ is a non-negative function, showed that  for any given $t>0$,
not only does $A_n(t)$ converge in law, but all moments of $A_n(t)$ also converge to those of the limiting Exponential distribution.
The general case appeared in \citet{Yamazaki:1992} and \citet[Lemma II.6 and ensuing remarks]{LeGall:1992},
after some preliminary work by others, notably \citet{Erdos/Taylor:1960} and \citet{Kasahara/Kotani:1979}.
\end{rem}

\subsubsection{Second case: $m(t)=te^{2t}$}
This scaling was proposed by \citet{Kasahara/Kotani:1979}. In fact,
\citet{Kasahara/Kotani:1979} showed that, if $G$ is a continuous positive function with $\bar G = \int_{\dR^2}|x|^\alpha G(x)dx < \infty $, for some $\alpha>0$, then
$\dfrac{1}{n}\int_0^{m(nt)}G(B_s)ds$ is equivalent in law to
$\disp
H_n(t) = \frac{1}{n}\int_0^{S^{-1}\{m(nt)\}} f(\beta_s,\theta_s) ds$,
where $\beta$ and $\theta$ are two independent Brownian motions, $S_t = \int_0^t e^{2\beta_u}
du$, and $f$ is chosen so that $\bar f = \int_{-\infty}^\infty \int_0^{2\pi}f(x,u)dx du = \bar G$. Then, setting
$T_n(t) = \dfrac{1}{n}m^{-1}\left\{S\left(n^2 t\right)\right\}$,
one gets $H_n\circ T_n(t) = \dfrac{1}{n}\int_0^{n^2 t}f(\beta_s,\theta_s) ds$.
As a result, they proved that
\begin{equation}\label{eq:K&K}
\left(H_n\circ T_n(t), \beta(n^2t)/n,T_n(t)\right)\stackrel{\mathcal{C}}{\rightsquigarrow} \left(2\bar G \ell, b,\dM\right),
\end{equation}
where $b$ is a Brownian motion, $\dM_t = \disp\sup_{s\le t}b(s)$, and $\ell_t $ is the local time at $0$ of $b$.
It then follows that $T_n^{-1}\stackrel{\cM_1}{\rightsquigarrow}\dM^{-1}$, which is a L\'evy process. In their Theorem 3.1,
 \citet{Kasahara/Kotani:1979} stated that $H_n\stackrel{\cM_1}{\rightsquigarrow}H = 2 \bar G \ell \circ M^{-1}$ without proving it,
 stopping at \eqref{eq:K&K}.  To complete their proof, one may apply Proposition \ref{prop:Whitt_composition}
 with $x_n = H_n\circ T_n$, $y_n = T_n^{-1}$, $x = 2\bar G \ell$, and $y= \dM^{-1}$ to conclude that
 $H_n\stackrel{\cM_1}{\rightsquigarrow}H = x\circ y = 2\bar G \ell\circ \dM^{-1}$, since $\ell$ is monotone.
\begin{rem}
It does not seem that such a  result on the convergence of composition was available to \citet{Kasahara/Kotani:1979},
since its first version appeared in \cite[Proposition 3.1]{Yamazaki:1992}.
\end{rem}

Next, in their Theorem 3.2,  \citet{Kasahara/Kotani:1979} stated, without proof, that if $\bar V=0$,
then $\tilde M_n(t) = \frac{1}{\sn}\int_0^{m(nt)}V(B_s)ds$ converges in  $\cM_1$ to $c_V \beta_2\circ H$, where $c_V=\dfrac{\bar f}{2\pi}$,
$H = 2 \bar G \ell \circ M^{-1}$ and $\beta_2$ is a Brownian motion independent of $b$ and $H$. To try to prove this result, note that
$\frac{1}{\sn}\int_0^{m(nt)}V(B_s)ds$ is equivalent in law to
$\disp M_n(t) = \frac{1}{\sn}\int_0^{S^{-1}\{m(nt)\}} f(\beta_s,\theta_s) ds$,
where $\bar f = \bar V=0$. The latter is almost a martingale due to the decomposition described previously.
Applying Theorem \ref{thm:jumps_vanish}, one gets that, for some $\gamma>0$,
$$
\left(M_n\circ T_n(t), H_n\circ T_n(t), \beta(n^2t)/n,T_n(t)\right)
\stackrel{\mathcal{C}}{\rightsquigarrow}\left(\gamma\beta_2\circ \ell, \bar G \ell, b,\dM\right),
$$
where $\beta_2$ is a Brownian motion independent of $\ell$. Because of Proposition \ref{prop:Whitt_composition},
$M_n = M_n\circ T_n\circ T_n^{-1}$ does not $\cM_1$-converge to $\gamma\beta_2\circ \ell\circ \dM^{-1}$ in $D$,
because $\beta_2$ is not monotone with probability one on the sets $[ \ell\circ \dM^{-1}(t-), \ell\circ \dM^{-1}(t)]$. Therefore, Theorem 3.2 in
\citet{Kasahara/Kotani:1979} is incorrect.
However, since the points of discontinuity of $T^{-1}$ are at most countable and not fixed, it follows from
\citet[Theorem 12.4.1]{Whitt:2002} that $M_n \stackrel{f.d.d.}{\rightsquigarrow} M_\infty = \gamma \beta_2\circ \ell\circ \dM^{-1}$.
This result was also proven in \citet{Csaki/Foldes/Hu:2004}, using strong approximations.
Next, the f.d.d. convergence implies that  $\int_0^t M_n(s)ds \stackrel{f.d.d.}{\rightsquigarrow} \int_0^t M_\infty(s)ds$,
and hence this completes the proof of $M_n \stackrel{\cL^2_w}{\rightsquigarrow} M_\infty$,
in this second case where $m(t)=te^{2t}$,
by an argument similar to the one used to prove Theorem \ref{thm:comp}.
In fact, for any choice of finite partition $0=t_0< t_1< t_2 \cdots <t_m=t$, $m\ge1$ and $n=\infty,1,2,\dots$, put
$$
Z_{n,m}(t):= \sum_{k=1}^m M_n(t_k)\left\{(t_k\wedge t)-(t_{k-1}\wedge t)\right\}.
$$
As $n\to\infty$, $Z_{n,m}(t)\stackrel{Law}{\rightsquigarrow}Z_{\infty,m}(t)$, while, as $m\to\infty$,
\begin{multline*}
E\left\{(Z_{n,m}(t) - \hbox{$\int_0^t$} M_n(s)ds)^2\right\} \\
\le \sum_{k=1}^m\left\{(t_k\wedge t)-(t_{k-1}\wedge t)\right\}^2\left\{E[M_n]_{t_k\wedge t}-E[M_n]_{t_{k-1}\wedge t}\right\}\\
\le \sup_k(t_k-t_{k-1})^2\sup_nE[M_n]_t\to0,
\end{multline*}
through the selection of any sequence of partitions $\{t_{k,m};k\le m\}$ with vanishing mesh, i.e.,
such that $\sup_k(t_{k,m}-t_{k-1,m})\to0$ as $m\to\infty$.


\appendix

\section{Miscellanea}\label{app:remvail}

Let $D= D[0,\infty)$ be the space of $\dB$-valued c\`adl\`ag trajectories (right continuous with left limits everywhere), for some separable Banach space $\dB$, i.e., $\dB$ is a complete separable normed linear space with norm $\|\cdot\|$.
$\dB$ will always be specified implicitly by the context at hand (usually $d$-dimensional Euclidean space $\dR^d$)
so the subscript $\dB$ is omitted. All processes considered in this section have their trajectories in $D$
and are adapted to a filtration $\dF = (\cF_t)_{t\ge 0}$ on a probability space
$(\Omega,\cF,P)$ satisfying the usual conditions (notably, right continuity of the filtration and completeness).
Trajectories in $D$ are usually noted $x(t)$ but occasionally $x_t$.

On $\dB^3$, set $\dC(x_1,x_2,x_3)=\|x_3-x_1\|$, $\dJ(x_1,x_2,x_3) = \|x_2-x_1\|\wedge \|x_2-x_3\|$, where $k\wedge\ell = \min(k,\ell)$
and let $\dM(x_1,x_2,x_3)$ be the minimum distance between
$x_2$ and the Banach space segment $[x_1,x_3]:=\{\lambda x_1+(1-\lambda)x_3\in \dB; \lambda\in[0,1]\}$. Then
$\dM(x_1,x_2,x_3) = 0 $ if $x_2 \in [x_1,x_3]$ and otherwise $\dM(x_1,x_2,x_3) \le \dJ(x_1,x_2,x_3)$.

For $\dH=\dM$, $\dH=\dJ$ or $\dH=\dC$, $T>0$, and $x \in D$, set
$$
\omega_\dH(x,\delta,T) = \sup_{0 \le t_1 < t_2 < t_3\le T, \; t_3-t_1<\delta}
\dH\{x(t_1),x(t_2),x(t_3)\}.
$$

Using the terminology in \citet[Theorem 3.2.2]{Skorohod:1956}, for each $\dH=\dM$, $\dH=\dJ$ or $\dH=\dC$,
a sequence of $D$-valued processes $X_n$ is $\mathcal{H}_1$-tight (respectively $\mathcal{M}_1$-tight,
$\mathcal{J}_1$-tight or simply $\mathcal{C}$-tight) if and only if
\begin{enumerate}
\item[(i)]
for every $t$ in an everywhere dense subset of $[0,\infty)$ that includes the origin, the marginal distributions of $X_n(t)$ are tight,

\item[(ii)]
 for any $\epsilon>0$, and $T>0$,
\begin{equation}\label{eq:modulus2}
\lim_{\delta\to 0}\limsup_{n\to\infty}P\{\omega_\dH(X_n,\delta,T) > \epsilon\}=0.
\end{equation}
\end{enumerate}

A sequence of processes $X_n$ converges weakly under the $\mathcal{H}_1$-topology on $D$ to $X$,
denoted $X_n\stackrel{\mathcal{H}_1}{\rightsquigarrow} X$ when $\dH=\dJ$ or $\dH=\dM$, if and only if
it is $\mathcal{H}_1$-tight and the finite dimensional distributions of $X_n$ converge to those of
$X$, denoted $X_n \stackrel{f.d.d.}{\rightsquigarrow} X$, over a dense set of times in $[0,\infty)$ containing $0$.
Similarly write $X_n\stackrel{\mathcal{C}}{\rightsquigarrow} X$ for weak convergence under the
$\mathcal{C}$-topology on $D$, that of uniform convergence over compact time sets, defined by taking $\dH=\dC$.
While we are on the subject of notation, equality in law or equidistribution is denoted by $\stackrel{Law}{=}$,
convergence in the $p^{th}$ mean by $\stackrel{L^p}{\rightsquigarrow}$,
in probability by $\stackrel{Pr}{\rightsquigarrow}$, in law by $\stackrel{Law}{\rightsquigarrow}$ and
almost sure convergence by $\stackrel{a.s.}{\rightsquigarrow}$.

Complete coverage of the $\cJ_1$-topology can be found in \citet{Ethier/Kurtz:1986} and \citet{Jacod/Shiryaev:2003},
where $\dB$ is actually allowed be a Polish space; while \citet{Whitt:2002} is our main reference on the $\mathcal{M}_1$-topology.
Neither topology turns $D$ into a topological vector space, that is to say, neither is compatible with the linear structure inherited
on $D$ from the Banach space $\dB$, which would have made the sum a continuous operator. Consequently, when considering vectors of processes valued in a product $\prod_{i=1}^d \dB_i$ of Banach spaces, neither topology offers the equivalence
between $\mathcal{H}_1$-convergence of sequences of probability measures on $D$ and coordinatewise $\mathcal{H}_1$-convergence,
an equivalence which holds for $\mathcal{C}$-convergence.
Similarly $\mathcal{H}_1$-tightness cannot be treated coordinatewise; it does however ensue from the
$\mathcal{H}_1$-tightness of each coordinate plus that of each pairwise sum of coordinates --- $d^2$ verifications are thus required.
This fact precludes immediate extensions of some results from the real line to higher dimensional spaces, but there are exceptions,
as will be seen next.

\begin{rem}\label{rem:producttopology}
All topological statements in this paper involving multiple coordinate functions or processes with $\dB$-valued c\`adl\`ag trajectories,
are stated and proved for the space $D=D([0,\infty):\dB)$, with Banach space product $\dB:=\prod_{i=1}^d\dB_i$ equipped with the norm
$| \cdot |_{\dB}:=\vee_{i=1}^d | \cdot |_{\dB_i}$. Any conclusion thus stated remains valid for the weaker
product topology for the space $\prod_{i=1}^d D([0,\infty):\dB_i)$ without further ado. This is the case whether the topology involved is
$\mathcal{C}$, $\cL^2_w$, $\mathcal{M}_1$ or $\mathcal{J}_1$. On those rare occasions when the latter product topology
is used, it is done explicitly by displaying the product space, such as in Proposition \ref{prop:Whitt_composition}.
\end{rem}

The following inequalities link the moduli associated with $\mathcal{H}_1$-topologies,
for both $\dH=\dJ$ and $\dH=\dM$, with that of the $\mathcal{C}$-topology,
thus providing a first situation when coordinatewise arguments do work.

\begin{lem}\label{lem:BasicInequality}
For any pair of functions $X, Y\in D$ there holds, for every choice of $\delta>0$ and $T>0$,
$\omega_\dH(X+Y,\delta,T)\le \omega_\dH(X,\delta,T)+\omega_\dC(Y,\delta,T)$.
For sequences of $D$-valued processes, if $X_n$ is $\mathcal{H}_1$-tight and
$Y_n$ is $\mathcal{C}$-tight, then $X_n+Y_n$ and $(X_n,Y_n)$ are also $\mathcal{H}_1$-tight.
Similarly if $X_n\stackrel{\mathcal{H}_1}{\rightsquigarrow} X$, $Y_n\stackrel{\mathcal{C}}{\rightsquigarrow} Y$, and
$(X_n,Y_n)\stackrel{f.d.d.}{\rightsquigarrow} (X,Y)$ then $X_n+Y_n\stackrel{\mathcal{H}_1}{\rightsquigarrow} X+Y$
and $(X_n,Y_n)\stackrel{\mathcal{H}_1}{\rightsquigarrow}(X,Y)$.
\end{lem}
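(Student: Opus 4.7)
The lemma splits into a deterministic modulus inequality, a tightness claim, and a weak convergence claim; once the inequality is in hand the other two follow from standard arguments. My plan is therefore to first establish the pointwise inequality $\omega_\dH(X+Y,\delta,T)\le\omega_\dH(X,\delta,T)+\omega_\dC(Y,\delta,T)$ for both $\dH=\dJ$ and $\dH=\dM$, then upgrade it to the joint bound $\omega_\dH((X_n,Y_n),\delta,T)\le\omega_\dH(X_n,\delta,T)+\omega_\dC(Y_n,\delta,T)$ under the product-max norm of Remark \ref{rem:producttopology}, and finally combine these with the hypothesised marginal and finite-dimensional convergences to conclude.

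For $\dH=\dJ$ I would fix a triple $t_1<t_2<t_3$ with $t_3-t_1<\delta$, apply the Banach-space triangle inequality $\|(X+Y)(t_2)-(X+Y)(t_i)\|\le\|X(t_2)-X(t_i)\|+\|Y(t_2)-Y(t_i)\|$ for $i=1,3$, and then invoke the elementary one-line bound $\min(a+b,a'+b')\le\min(a,a')+\max(b,b')$ to absorb the $Y$-contributions into $\omega_\dC(Y,\delta,T)$. For $\dH=\dM$ I would instead pick an almost-minimising $\lambda^*\in[0,1]$ for the $X$-triple and plug the same $\lambda^*$ into the point-to-segment distance of $X+Y$; a single triangle inequality then splits the estimate into $\dM(X(t_1),X(t_2),X(t_3))$ plus $\|\lambda^*(Y(t_2)-Y(t_1))+(1-\lambda^*)(Y(t_2)-Y(t_3))\|$, and the latter is at most $\max(\|Y(t_2)-Y(t_1)\|,\|Y(t_2)-Y(t_3)\|)\le\omega_\dC(Y,\delta,T)$. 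The joint-modulus versions rest on the same two ideas: writing $a_i=\|X(t_2)-X(t_i)\|$ and $b_i=\|Y(t_2)-Y(t_i)\|$, the $\dJ$-case reduces to the elementary inequality $\max(a_1,b_1)\wedge\max(a_3,b_3)\le(a_1\wedge a_3)+(b_1\vee b_3)$ (a short case split on the ordering of the $a_i$'s and $b_i$'s), while the $\dM$-case reuses the $\lambda^*$-coupling, noting that a product segment under the max-norm factors coordinatewise.

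With the modulus inequalities in hand, tightness is routine: marginal tightness of $X_n(t)+Y_n(t)$ and $(X_n(t),Y_n(t))$ at each $t$ is immediate from marginal tightness of $X_n(t)$ and of $Y_n(t)$ (the latter from $\mathcal{C}$-tightness of $Y_n$), and the modulus criterion \eqref{eq:modulus2} is secured by the deterministic bounds combined with the hypothesised tightness of $X_n$ in $\mathcal{H}_1$ and of $Y_n$ in $\mathcal{C}$. For the weak convergence statements, the joint $\mathcal{H}_1$-tightness together with the hypothesised f.d.d. convergence of $(X_n,Y_n)$ to $(X,Y)$ gives $(X_n,Y_n)\stackrel{\mathcal{H}_1}{\rightsquigarrow}(X,Y)$ directly; applying the continuous mapping theorem to addition at any common continuity time of $X$ and $Y$ — a dense subset of $[0,\infty)$ since $X,Y\in D$ — then gives $X_n+Y_n\stackrel{f.d.d.}{\rightsquigarrow}X+Y$ along a dense set, which combined with the $\mathcal{H}_1$-tightness of $X_n+Y_n$ yields $X_n+Y_n\stackrel{\mathcal{H}_1}{\rightsquigarrow}X+Y$.

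The step I expect to be most delicate is the $\dM$-case of the modulus inequality, both in its scalar and in its joint form, because the point-to-segment distance is defined by an infimum with no closed form; the $\lambda^*$-coupling trick is what allows information about $X$ to be transferred to $X+Y$ while keeping the $Y$-remainder controlled by $\omega_\dC(Y,\delta,T)$.
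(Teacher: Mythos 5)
Your proof is correct and follows essentially the same route as the paper: the key step in both is to take the minimising $\lambda$ for the $X$-triple, insert it into the point-to-segment (or endpoint) distance for $X+Y$, and bound the leftover $Y$-term by $\|Y(t_2)-Y(t_1)\|\vee\|Y(t_2)-Y(t_3)\|\le\omega_\dC(Y,\delta,T)$, with the $\dJ$ case recovered by restricting $\lambda$ to $\{0,1\}$. Your explicit treatment of the joint modulus for $(X_n,Y_n)$ under the product max-norm is slightly more detailed than the paper, which leaves that step implicit (it can alternatively be deduced from the tightness of each coordinate together with that of the sum), but the substance is the same.
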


\begin{proof} Given two triplets $(x_1,x_2,x_3), (y_1,y_2,y_3)\in \dB^3$, there is a $\lambda\in[0,1]$ achieving the minimum for
$\dM(x_1,x_2,x_3)=\|x_2-\{\lambda x_1+(1-\lambda)x_3\}\|$ and therefore
\begin{eqnarray*}
& & \dM(x_1+y_1,x_2+y_2,x_3+y_3) \\
& \le & \|x_2+y_2-\{\lambda (x_1+y_1)+(1-\lambda)(x_3+y_3)\}\| \\
& \le & \|x_2-\{\lambda x_1+(1-\lambda)x_3\}| + |y_2-\{\lambda y_1+(1-\lambda)y_3\}\| \\
& = & \dM(x_1,x_2,x_3) +  \|y_2-\{\lambda y_1+(1-\lambda)y_3\}\| \\
& \le& \dM(x_1,x_2,x_3) +  \|y_2-y_1\| \vee \|y_2-y_3\|,
\end{eqnarray*}
this last inequality using convexity of the norm and $k\vee\ell = \max(k,\ell)$.
Restricting to $\lambda\in\{0,1\}$ instead of $[0,1]$ yields the same inequalities with $\dJ$ in place of $\dM$.
\end{proof}

The following result provides necessary and sufficient conditions under which the
composition mapping $\circ$ is $\mathcal{M}_1$-continuous, when $B=\dR^k$.
Let $D_{\uparrow}\subset D$ be the subspace of those non-decreasing non-negative $y$ such that $y(t) \to\infty$ as $t\to\infty$.

\begin{prop}\label{prop:Whitt_composition}
Suppose that $(x_n, y_n) \stackrel{}{\to} (x, y) $ in $D \times  D_{\uparrow}$, where $x$ is continuous and $D$ is equipped with the
$\cM_1$-topology. Assume also that every $y_n$ is continuous and strictly increasing.
Then $\disp x_n \circ y_n \stackrel{}{\to}  x\circ  y$ in $D$ if and only if $x$ is monotone on $[y(t-), y(t)]$,
for any $t\in {\rm Disc}(y)$.
\end{prop}

\begin{proof} Sufficiency follows from \citet[Theorem 13.2.4]{Whitt:2002} so we only prove necessity.
Suppose $y_n \to y$ in $\cM_1$, $y_n$ continuous and strictly increasing.  Assume that $t\in {\rm Disc}(y)$, and let $t_{1,m}$ be sequence increasing to $t$ with $t_{1,m} \not \in {\rm Disc}(y)$.
Further let  $t_{m,3}$ be sequence decreasing to $t$ with $t_{m,3} \not \in {\rm Disc}(y)$. Finally, for a given $\lambda \in (0,1)$, let $t_{n,2,m}$ be the unique point such that
$y_n(t_{n,2,m}) = \lambda y_n(t_{1,m})+(1-\lambda)y_n(t_{3,m})$. For a given $\delta>0$, $t-\delta < t_{1,m} < t_{m,3} < t+\delta$ if $m$ is large enough. By \cite[Theorem 12.5.1 (v)]{Whitt:2002}, since $t_{1,m},t_{3,m}$ are continuity point of $y$, it follows that
$\disp \lim_{\delta_1\to 0}\limsup_{n\to\infty}\omega_\dC(y_n-y,t_{j,m},\delta_1) = 0$, $j\in\{1,3\}$;
in particular, $y_n(t_{j,m})\to s_{j,m} = y(t_{j,m})$ as $n\to\infty$.
Thus, $y_n(t_{n,2,m}) \to s_{2,m} = \lambda y(t_{1,m})+(1-\lambda)y(t_{3,m})$. If $x_n\to x$ in $\cM_1$ with $x$ continuous,
it follows from \cite[Theorem 12.5.1 (v)]{Whitt:2002} that $x_n\circ y_n(t_{n,j,m})\to x(s_{j,m})$, $j\in\{1,2,3\}$. Therefore,
$$
\limsup_{n\to\infty}\omega_\dM(x_n\circ y_n,\delta,T)\ge
\dM\left\{x(s_{1,m}),x(s_{2,m}),x(s_{3,m})\right\}
$$
if $m$ is large enough. By letting $m\to\infty$, one gets $s_{1,m}\to s_1 = y(t-)$, $s_{3,m}\to s_3 = y(t)$ and $s_{2,m}\to \lambda y(t-)+ (1-\lambda)y(t)$.  As  a result,
\begin{multline*}
\lim_{\delta \to 0} \limsup_{n\to\infty}\omega_\dM(x_n\circ y_n,\delta,T) \\
 \ge  \sup_{t\in {\rm Disc}(y), t <T}\sup_{y(t-)  \le s \le y(t)}\dM\left[x\{y(t-)\},x(s),x\{y(t)\}\right]>0,
\end{multline*}
unless $x$ is monotone on each interval $[y(t-),y(t)]$.
\end{proof}

From hereon we focus exclusively on the real-valued functions and processes so $\dB=\dR$.
Some observations for sequences of $D$-valued non-decreasing processes come next.
For any non-decreasing non-negative function $A\in D$, denote its inverse by $\tau(s) = \inf\{t\ge0; A(t)>s\}$.
Let $D_{\uparrow}^0$ be the subspace of those trajectories in $D_{\uparrow}\subset D$ where $\tau(0)=0$ as well.

\begin{prop}\label{prop:inversecontinuity}
The inverse map $A\mapsto\tau$ is a well defined bijective mapping from $D_{\uparrow}$ into itself, such that there holds
$A\circ\tau\circ A=A$ provided $A\in D_{\uparrow}$ is either continuous everywhere or strictly increasing everywhere.
Both $A\mapsto\tau$ and the reverse map $\tau\mapsto A$ are $\mathcal{M}_1$-continuous
when restricted to $D_{\uparrow}^0$; this is not necessarily the case without this additional restriction, not even on $D_{\uparrow}$.
\end{prop}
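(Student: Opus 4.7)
\emph{Plan.} My approach would split the proposition into three claims, handled in turn.

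\emph{Well-definedness of the inverse and the identity $A\circ\tau\circ A=A$.} First, for any $A\in D_\uparrow$, I would verify directly from $\tau(s)=\inf\{t\ge0;A(t)>s\}$ that $\tau$ is non-negative, non-decreasing, right-continuous (the infimum of the open sub-level set $\{t:A(t)>s\}$), and satisfies $\tau(s)\to\infty$ as $s\to\infty$ (since each $A(t)$ is finite, $\tau(s)>T$ once $s>A(T)$), giving $\tau\in D_\uparrow$. The identity $A\circ\tau\circ A=A$ then splits into two cases. Fix $t\ge0$ and set $u=A(t)$. If $A$ is strictly increasing, then $\{s:A(s)>A(t)\}=(t,\infty)$, so $\tau(u)=t$ by right-continuity, yielding $A(\tau(A(t)))=A(t)$. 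If instead $A$ is continuous, right-continuity combined with the intermediate value property near $t$ gives $A(\tau(u))=u=A(t)$. Either case also identifies $(\tau)^{-1}=A$, so the inverse map is an involution and hence a bijection of $D_\uparrow$ with itself.

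\emph{$\mathcal{M}_1$-continuity on $D_{\uparrow}^0$.} For this I would lean on the completed-graph characterization of $\mathcal{M}_1$-convergence for monotone trajectories (\citet[Section 13.6]{Whitt:2002}): the completed graph $\Gamma_A$ of $A\in D_\uparrow$, obtained by filling each jump $t$ with the vertical segment $\{t\}\times[A(t-),A(t)]$, is a closed connected subset of $[0,\infty)^2$, and $A_n\to A$ in $\mathcal{M}_1$ iff $\Gamma_{A_n}\to\Gamma_A$ in Hausdorff distance uniformly on compact rectangles. Since $\Gamma_\tau$ is the image of $\Gamma_A$ under the coordinate swap $(t,s)\mapsto(s,t)$, and this swap is a Euclidean isometry that preserves Hausdorff distances, $A_n\to A$ in $\mathcal{M}_1$ forces $\tau_n\to\tau$ in $\mathcal{M}_1$, and conversely. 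The restriction $\tau(0)=0$ ensures that $\Gamma_A$ meets the corner $(0,0)$ as a single point rather than containing a segment along either axis, which is precisely what is needed so that compact rectangles in $[0,T]\times[0,\infty)$ correspond under the swap to compact rectangles in $[0,\infty)\times[0,T]$ without boundary mismatch.

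\emph{Counterexample and main obstacle.} To see that $D_\uparrow^0$ cannot be relaxed to $D_\uparrow$, I would take $A_n(t)=(1/n)\mathbf{1}_{[0,1)}(t)+t\,\mathbf{1}_{[1,\infty)}(t)$. Each $A_n\in D_\uparrow^0$ since $A_n(t)>0$ for all $t\ge0$ forces $\tau_n(0)=0$, and $A_n\to A$ uniformly (hence in $\mathcal{M}_1$) for $A(t)=t\,\mathbf{1}_{[1,\infty)}(t)\in D_\uparrow\setminus D_\uparrow^0$, with $\tau(0)=1$. Direct computation gives $\tau_n(0)=0$ while $0$ is a right-continuity point of $\tau$, so $\tau_n\not\to\tau$ in $\mathcal{M}_1$; this witnesses the failure of continuity as soon as one leaves $D_\uparrow^0$. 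The principal technical hurdle I anticipate will be the rigorous reflection argument at the corner $(0,0)$ of the completed graph---precisely the point where the additional hypothesis $\tau(0)=0$ enters to prevent pathological accumulation of mass on an axis under the swap.
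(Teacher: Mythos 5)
Your proposal is essentially correct, but it is worth noting that the paper does not actually prove this proposition: it simply cites \citet[Corollary 13.6.1, Lemma 13.6.5 and Theorem 13.6.3]{Whitt:2002}. What you have written is, in effect, a reconstruction of the substance behind those citations. Your verification that $\tau\in D_{\uparrow}$ and your two-case treatment of $A\circ\tau\circ A=A$ are correct, and your counterexample $A_n=(1/n)\mathbf{1}_{[0,1)}+t\,\mathbf{1}_{[1,\infty)}$ is a clean witness of the failure of continuity outside $D_{\uparrow}^0$: each $A_n\in D_{\uparrow}^0$, $A_n\to A$ uniformly, yet $\tau_n(0)=0\not\to 1=\tau(0)$, and $\mathcal{M}_1$-convergence on $D[0,\infty)$ requires convergence at the origin. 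Your completed-graph reflection argument is also the right mechanism (it is how Whitt proves Theorem 13.6.3), and you correctly identify the corner $(0,0)$ as the place where $\tau(0)=0$ is used.

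Two points deserve tightening. First, your bijectivity argument has a genuine (if small) gap: you derive the involution $(\tau)^{-1}=A$ only in the two special cases (continuous or strictly increasing $A$), whereas the proposition asserts bijectivity on all of $D_{\uparrow}$. The identity $A\circ\tau\circ A=A$ does not imply $(\tau)^{-1}=A$, and the involution property in fact holds for every $A\in D_{\uparrow}$ (this is exactly \citet[Lemma 13.6.5]{Whitt:2002}); it needs its own short argument, e.g.\ by checking that $\inf\{s:\tau(s)>t\}=A(t)$ directly from right-continuity and monotonicity, with no smoothness or strict monotonicity assumed. Second, the claim that $\Gamma_\tau$ is exactly the coordinate swap of $\Gamma_A$ is false in general: when $A(0)>0$, the graph $\Gamma_\tau$ contains the horizontal segment $[0,A(0)]\times\{0\}$, whose swap is a vertical segment at $t=0$ that is \emph{not} part of $\Gamma_A$ under the convention $A(0-)=A(0)$. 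This discrepancy is confined to the origin and does not destroy the argument, but it is precisely the ``boundary mismatch'' you allude to, and a complete proof must handle it explicitly rather than deferring it as an anticipated hurdle.
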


\begin{proof}
The first statement combines \citet[Corollary 13.6.1]{Whitt:2002} together with \citet[Lemma 13.6.5]{Whitt:2002};
the second one is \citet[Theorem 13.6.3]{Whitt:2002} essentially.
\end{proof}

\begin{rem}\label{rem:M1increasing} A sequence of $D$-valued non-decreasing processes verify
$A_n \stackrel{\mathcal{M}_1}{\rightsquigarrow} A$
if and only if $A_n \stackrel{f.d.d.}{\rightsquigarrow} A$, since they all satisfy $\omega_\dM(A_n,\delta)=0$ for any $\delta>0$.
Similarly, the $A_n$ are $\mathcal{M}_1$-tight if and only if their finite dimensional distributions are tight; since each $A_n$ is
non-decreasing, this last condition is equivalent to the tightness of $(A_n(T))_{n\ge 1}$ for every $T>0$, provided the $A_n$ are
non-negative (or at least uniformly bounded below). Furthermore, the
sum $A_n+A'_n$ of two $\mathcal{M}_1$-tight sequences of non-negative, non-decreasing processes is also $\mathcal{M}_1$-tight,
hence so are $(A_n,0)$, $(0,A'_n)$ and finally $(A_n,A'_n)$ as well. If in addition there holds
$A_n \stackrel{\mathcal{M}_1}{\rightsquigarrow} A$ and $A'_n \stackrel{\mathcal{M}_1}{\rightsquigarrow} A'$, then
there ensues $A_n+A'_n \stackrel{\mathcal{M}_1}{\rightsquigarrow} A+A'$ and
$(A_n,A'_n) \stackrel{\mathcal{M}_1}{\rightsquigarrow} (A,A')$.
\end{rem}

More can be said when the prospective limit is continuous. The following result is taken from \citet[Proposition A.4]{Remillard/Vaillancourt:2024a}.

\begin{prop}\label{prop:Cincreasing}
Let $D$-valued non-decreasing processes $A_n$ and some continuous process $A$
be such that $A_n \stackrel{f.d.d.}{\rightsquigarrow} A$. Then $A_n$ is $\mathcal{C}$-tight
and $A_n \stackrel{\mathcal{C}}{\rightsquigarrow} A$.
\end{prop}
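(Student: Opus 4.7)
The plan is to establish $\mathcal{C}$-tightness directly from the definition, after which $\mathcal{C}$-convergence follows by combining tightness with the hypothesized f.d.d.\ convergence (noting that since $A$ is continuous, every $t\ge0$ is a continuity point of $A$, so f.d.d.\ convergence identifies all finite-dimensional limits).

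The marginal tightness of $\{A_n(t); n\ge 1\}$ for each $t\ge 0$ is immediate from $A_n\stackrel{f.d.d.}{\rightsquigarrow}A$. The real content is the modulus of continuity condition: for every $\epsilon,T>0$,
\begin{equation*}
\lim_{\delta\to 0}\limsup_{n\to\infty}P\{\omega_{\dC}(A_n,\delta,T)>\epsilon\}=0.
\end{equation*}
Here I would exploit monotonicity to reduce the oscillation to a finite combination of differences at deterministic times. Fix $\delta>0$ and any partition $0=s_0<s_1<\cdots<s_m=T$ with $\max_k(s_k-s_{k-1})\le\delta/2$. For any $0\le t_1<t_3\le T$ with $t_3-t_1<\delta$, there is some $k$ with $t_1,t_3\in[s_{k-1},s_{k+1}]$, so by non-decrease,
\begin{equation*}
|A_n(t_3)-A_n(t_1)|=A_n(t_3)-A_n(t_1)\le A_n(s_{k+1})-A_n(s_{k-1}).
\end{equation*}
Taking the supremum over such $t_1,t_3$ yields $\omega_{\dC}(A_n,\delta,T)\le R_n(\delta):=\max_{1\le k\le m-1}\{A_n(s_{k+1})-A_n(s_{k-1})\}$.

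Next, since $A$ is continuous, all the $s_k$ are continuity points of $A$, so f.d.d.\ convergence gives
\begin{equation*}
R_n(\delta)\stackrel{Law}{\rightsquigarrow}R(\delta):=\max_{1\le k\le m-1}\{A(s_{k+1})-A(s_{k-1})\}.
\end{equation*}
By the portmanteau theorem, $\limsup_{n\to\infty}P\{R_n(\delta)>\epsilon\}\le P\{R(\delta)\ge\epsilon\}$. Now $A$ is almost surely uniformly continuous on $[0,T]$ because its trajectories are continuous on a compact; choosing for each $\delta>0$ a partition of mesh at most $\delta/2$, we have $R(\delta)\le\omega_{\dC}(A,\delta,T)\to 0$ almost surely as $\delta\to 0$, hence $P\{R(\delta)\ge\epsilon\}\to 0$. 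Combining,
\begin{equation*}
\lim_{\delta\to 0}\limsup_{n\to\infty}P\{\omega_{\dC}(A_n,\delta,T)>\epsilon\}\le\lim_{\delta\to 0}P\{R(\delta)\ge\epsilon\}=0,
\end{equation*}
which proves $\mathcal{C}$-tightness. Then any weak subsequential limit in $\mathcal{C}$ must agree in f.d.d.\ with $A$, forcing $A_n\stackrel{\mathcal{C}}{\rightsquigarrow}A$.

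The only mildly delicate point is the passage from the finite-dimensional limsup bound to the almost sure vanishing of $R(\delta)$; this is handled by the uniform continuity of the sample paths of $A$ on $[0,T]$, so no genuine obstacle arises. The proof is essentially standard once monotonicity is used to dominate the uniform modulus by increments over a fixed partition.
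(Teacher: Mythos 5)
The paper itself does not prove this proposition --- it is imported from the companion paper \citet{Remillard/Vaillancourt:2023} --- so your argument has to stand on its own. Your overall strategy is the standard and correct one: use monotonicity to dominate the uniform modulus $\omega_{\dC}(A_n,\delta,T)$ by increments of $A_n$ over a fixed finite partition, pass to the limit in law at the partition points, and make the limiting quantity small via the almost sure uniform continuity of $A$ on $[0,T]$. However, the pivotal inequality $\omega_{\dC}(A_n,\delta,T)\le R_n(\delta)$ is false for a partition of mesh at most $\delta/2$: an interval $[t_1,t_3]$ of length just under $\delta$ can straddle three cells of length $\delta/2$ (take $s_k=k\delta/2$, $t_1=\delta/4$, $t_3=t_1+0.9\,\delta$), so there need not exist any $k$ with $t_1,t_3\in[s_{k-1},s_{k+1}]$. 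The containment you invoke requires the cells to have length \emph{at least} $\delta$, whereas your final step ($R(\delta)\le\omega_{\dC}(A,\delta,T)\to 0$) wants them of length at most $\delta/2$; as written the two requirements are incompatible and the proof does not close.

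The repair is routine and does not change the idea. Either take equally spaced points with spacing $h\in[\delta,2\delta)$, so that any interval of length $<\delta$ is contained in some $[s_{k-1},s_{k+1}]$, giving $\omega_{\dC}(A_n,\delta,T)\le R_n(\delta)$ while still $R(\delta)\le\omega_{\dC}(A,4\delta,T)\to 0$ almost surely as $\delta\to 0$; or keep mesh at most $\delta/2$ but enlarge the window to three consecutive cells and bound $A_n(t_3)-A_n(t_1)$ by $3\max_k\{A_n(s_k)-A_n(s_{k-1})\}$. Two further small points: the paper's definition of f.d.d.\ convergence only guarantees convergence of marginals over a dense set of times containing $0$, so the partition points $s_k$ should be chosen from that dense set (always possible with the required spacing, and harmless since $A$ is continuous); and your final deduction of $\mathcal{C}$-convergence from $\mathcal{C}$-tightness plus f.d.d.\ convergence is immediate in this paper's framework, because that conjunction is precisely its definition of $A_n\stackrel{\mathcal{C}}{\rightsquigarrow}A$.
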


The following technical result ensues. Once again $M_n$ is a sequence of $D$-valued square integrable
$\mathbb{F}$-martingales started at $M_n(0)=0$
with quadratic variation $[M_n]$ and predictable compensator $\crochet{ M_n }$.

\begin{prop}\label{prop:Cto0}
$M_n \stackrel{\mathcal{C}}{\rightsquigarrow} 0$,
$[M_n] \stackrel{\mathcal{C}}{\rightsquigarrow} 0$ and
$\crochet{ M_n }  \stackrel{\mathcal{C}}{\rightsquigarrow} 0$ are equivalent.
\end{prop}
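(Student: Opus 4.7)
\medskip\noindent\textbf{Proof plan.}
The plan is to reduce each of the three $\mathcal{C}$-convergence statements to a one-dimensional convergence in probability at fixed times, and then derive all the equivalences by repeated application of Lenglart's inequality (\citet[Theorem I.3.32]{Jacod/Shiryaev:2003}).

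First I would observe that $[M_n]$ and $\crochet{M_n}$ are non-decreasing $D$-valued processes starting at zero, so Proposition~\ref{prop:Cincreasing} will reduce the two statements $[M_n]\stackrel{\mathcal{C}}{\rightsquigarrow}0$ and $\crochet{M_n}\stackrel{\mathcal{C}}{\rightsquigarrow}0$ to $[M_n]_T\stackrel{Pr}{\rightsquigarrow}0$ and $\crochet{M_n}_T\stackrel{Pr}{\rightsquigarrow}0$ at every $T>0$, respectively. Similarly, since the zero process is continuous, $M_n\stackrel{\mathcal{C}}{\rightsquigarrow}0$ will be equivalent to the running supremum $M_n^*(T):=\sup_{t\le T}|M_n(t)|\stackrel{Pr}{\rightsquigarrow}0$ for every $T>0$.

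Next, because $M_n$ is square integrable with $M_n(0)=0$, I would apply optional stopping to the local martingales $M_n^2-[M_n]$ and $M_n^2-\crochet{M_n}$ to record the matched expectations $E\{[M_n]_\tau\}=E\{M_n^2(\tau)\}=E\{\crochet{M_n}_\tau\}\le E\{(M_n^*(\tau))^2\}$ at every bounded stopping time $\tau$. Each of the four ordered pairs of non-decreasing adapted c\`adl\`ag processes appearing below is then in Lenglart domination, so four applications of Lenglart's inequality will furnish the bounds
\begin{align*}
P([M_n]_T\ge c) &\le \tfrac{d}{c}+P(\crochet{M_n}_T\ge d),\\
P(\crochet{M_n}_T\ge c) &\le \tfrac{d}{c}+P([M_n]_T\ge d),\\
P((M_n^*(T))^2\ge c) &\le \tfrac{d}{c}+P(\crochet{M_n}_T\ge d),\\
P(\crochet{M_n}_T\ge c) &\le \tfrac{d}{c}+P((M_n^*(T))^2\ge d),
\end{align*}
valid for every $c,d,T>0$. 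A standard $\epsilon$-$\eta$ argument on each of these will deliver the three pointwise equivalences, which by the first step translate into the three desired $\mathcal{C}$-convergence equivalences.

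The only non-trivial verification along the way will be the joint validity of the matched expectations at bounded stopping times; this is however immediate under the square integrability assumption through the Doob--Meyer decomposition of the submartingale $M_n^2$, so no real obstacle is anticipated.
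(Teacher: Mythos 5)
Your proposal follows essentially the same route as the paper's proof: reduce each $\mathcal{C}$-convergence to convergence in probability at fixed times via Proposition \ref{prop:Cincreasing} and the $\mathcal{C}$-continuity of the running supremum, match expectations through $E\{[M_n]_\tau\}=E\{M_n^2(\tau)\}=E\{\crochet{M_n}_\tau\}$ at bounded stopping times (the paper records the same domination via Doob's inequality and $E\{\crochet{M_n}_t\}=E\{[M_n]_t\}$), and close the cycle with four applications of Lenglart's inequality. The one point to tidy up --- which the paper's own write-up also passes over silently --- is that in your second and fourth displayed bounds the dominating processes $[M_n]$ and $(M_n^*)^2$ are adapted but not predictable, so it is part (b) of Lemma \ref{lem:lenglart} that applies and those bounds should carry the additional term $\tfrac{1}{c}E\{J_T(\cdot)\}$ coming from the largest jump of the dominator.
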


\begin{proof} The operator on $D$ defined by mapping $x\in D$ to $t\mapsto\sup_{0\le s\le t}x(s)$,
is a continuous operator in the $\mathcal{C}$-topology. The three statements
$M_n \stackrel{\mathcal{C}}{\rightsquigarrow} 0$, $M_n^2 \stackrel{\mathcal{C}}{\rightsquigarrow} 0$ and
$\sup_{0\le s\le\{\cdot\}}M_n^2(s) \stackrel{\mathcal{C}}{\rightsquigarrow} 0$ are therefore equivalent.
By Proposition \ref{prop:Cincreasing}, it suffices to prove the proposition with each of
$M_n \stackrel{\mathcal{C}}{\rightsquigarrow} 0$,
 $[M_n] \stackrel{\mathcal{C}}{\rightsquigarrow} 0$ and
$\crochet{ M_n } \stackrel{\mathcal{C}}{\rightsquigarrow} 0$,
respectively replaced by $\sup_{0\le s\le t}M_n^2(s) \stackrel{Law}{\rightsquigarrow}0$,
$[M_n]_t\stackrel{Law}{\rightsquigarrow}0$ and
$\crochet{ M_n }_t\stackrel{Law}{\rightsquigarrow}0$, for all $t\ge0$ in all three cases.
Doob's inequality yields
$$
E\{[M_n]_t\} \le E\left\{\sup_{0\le s\le t}M_n^2(s)\right\}\le 4E\{[M_n]_t\}.
$$
Applying Lemma \ref{lem:lenglart} twice, interchanging the roles of $X$ and $Y$, yields the first equivalence for $M_n$ and $[M_n]$.
Equality $E\{\crochet{ M_n }_t\} = E\{[M_n]_t\}$ and another double application of Lemma \ref{lem:lenglart} yields the last one,
this time with $\crochet{ M_n }$ and $[M_n]$.
\end{proof}
Next is a miscellanea of useful results. First comes a CLT in the $\mathcal{C}$-topology,
taken from \citet[Theorem 2.1]{Remillard/Vaillancourt:2024a}. We only state it for $d=1$ here.

\begin{thm}\label{thm:jumps_vanish}
Assume that Hypothesis \ref{hyp:an_unbounded} holds with $A$ continuous everywhere; that
$J_t(M_n) \stackrel{Law}{\rightsquigarrow} 0$ for any $t>0$; that there exists an
$\dF$-adapted sequence of $D$-valued square integrable martingales $B_n$ started at $B_n(0)=0$ so that
\begin{enumerate}
\item $(B_n, A_n) \stackrel{\mathcal{C}}{\rightsquigarrow} (B,A)$ holds, where $B$ is a Brownian motion
with respect to its natural filtration $\dF_B = \{\cF_{B,t}: \; t\ge 0\}$ and $A$ is $\dF_B$-measurable;
\item  $\langle M_n, B_n\rangle_t \stackrel{Law}{\rightsquigarrow} 0$, for any $t\ge 0$.
\end{enumerate}
Then $(M_n, A_n,B_n) \stackrel{\mathcal{C}}{\rightsquigarrow} (M,A,B)$ holds,
where $M$ is a continuous square integrable ${\cF}_{t}$-martingale with respect to (enlarged) filtration
with predictable quadratic variation process $A$.
Moreover, $M = W\circ A$ holds, with $W$ a standard Brownian motion which is independent of $B$ and $A$.

\end{thm}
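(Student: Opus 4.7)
The plan is to establish $\cC$-tightness of the triple $(M_n, A_n, B_n)$, identify any weak subsequential limit as an orthogonal pair of continuous square-integrable martingales with respect to a suitable enlarged filtration, and invoke the Dambis/Dubins-Schwarz representation together with Knight's theorem to produce the Brownian motion $W$ of the statement, independent of both $B$ and $A$.

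\textbf{Step 1 (tightness).} The pair $(B_n, A_n)$ is already $\cC$-tight by hypothesis. Marginal tightness of $M_n(t)$ for each $t$ follows from Doob's $L^2$-inequality applied to $M_n$ together with the uniform bound $\sup_n E M_n^2(t) = \sup_n EA_n(t) < \infty$ granted by Hypothesis \ref{hyp:an_unbounded}.b(ii). Since $A$ is continuous, $J_t(M_n) \stackrel{Law}{\rightsquigarrow} 0$, and $A_n = \langle M_n\rangle \stackrel{\cC}{\rightsquigarrow} A$, the classical martingale functional CLT (Rebolledo; see, e.g., \citet[Theorem VIII.3.11]{Jacod/Shiryaev:2003}), in combination with Proposition \ref{prop:Cto0}, delivers $\cC$-tightness of $M_n$. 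Hence $(M_n, A_n, B_n)$ is $\cC$-tight.

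\textbf{Step 2 (identification of the limit).} Along any subsequence on which the triple converges to some $(M^*, A, B)$ in $\cC$, the limit $M^*$ has continuous paths since $J_t(M_n)\to 0$ in probability. Set $\cF_t := \sigma(M^*_u, A_u, B_u; u \le t)$. For any bounded continuous functional $\Phi$ of finitely many evaluations at times $\le s$ of $M_n$, $A_n$, and $B_n$, the martingale properties of $M_n$, of $M_n^2 - A_n$, and of $M_nB_n - \langle M_n, B_n\rangle$ yield
\begin{align*}
E[\Phi \cdot (M_n(t) - M_n(s))] &= 0, \\
E[\Phi \cdot ((M_n(t) - M_n(s))^2 - (A_n(t) - A_n(s)))] &= 0, \\
E[\Phi \cdot ((M_n(t) - M_n(s))(B_n(t) - B_n(s)) - (\langle M_n, B_n\rangle_t - \langle M_n, B_n\rangle_s))] &= 0.
\end{align*}
Passing to the limit in each --- using uniform integrability of the integrands supplied by $\sup_n EA_n(t) < \infty$ (which bounds $\sup_n EM_n^2(t)$), Cauchy-Schwarz, and the hypothesis $\langle M_n, B_n\rangle_t \stackrel{Law}{\rightsquigarrow} 0$ --- identifies $M^*$ as a continuous $\cF_t$-martingale with $\langle M^*\rangle = A$ and $\langle M^*, B\rangle \equiv 0$.

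\textbf{Step 3 (representation and independence).} By Hypothesis \ref{hyp:an_unbounded}.b(iii), $\langle M^*\rangle_\infty = A_\infty = \infty$ almost surely, and $\langle B\rangle_\infty = \infty$ since $B$ is Brownian. Dambis/Dubins-Schwarz applied to $M^*$ produces a standard Brownian motion $W$ with $M^*_t = W_{A_t}$. Knight's theorem, applied to the orthogonal continuous $\cF_t$-martingales $M^*$ and $B$, asserts that their Dambis/Dubins-Schwarz Brownian motions are mutually independent; since the DDS Brownian motion of $B$ is $B$ itself, $W$ is independent of $B$, and --- since $A$ is $\dF_B$-measurable --- of $A$ as well. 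This uniquely determines the joint distribution of $(M^*, A, B) = (W\circ A, A, B)$, so Prohorov's theorem promotes the subsequential convergence to convergence of the full sequence.

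The principal obstacle will be Step 2: passing to the limit in the three martingale identities requires uniform integrability. Hypothesis \ref{hyp:an_unbounded}.b(ii) supplies $L^1$-uniform integrability of $M_n^2(t)$, which, combined with Cauchy-Schwarz and the $L^2$-boundedness of $B_n$, handles the cross products $M_n(t)B_n(t)$; the third identity further depends on $\langle M_n, B_n\rangle_t \stackrel{Law}{\rightsquigarrow} 0$ being accompanied by enough uniform integrability to close the argument. Localization by stopping times such as $\inf\{u: A_n(u) > K\}$ may be deployed to control any higher moments that arise, with the stopping times being released at the end using the vanishing-jump hypothesis together with $J_t(M_n) \stackrel{Law}{\rightsquigarrow} 0$.
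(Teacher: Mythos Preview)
The paper does not prove this theorem; it is imported from the companion work \citet{Remillard/Vaillancourt:2023} (see the sentence immediately preceding the statement: ``First comes a CLT in the $\mathcal{C}$-topology, taken from \citet{Remillard/Vaillancourt:2023}''). There is therefore no proof in the present paper against which to compare your proposal.

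On its own merits, your plan is the standard route and is essentially sound: $\cC$-tightness of $M_n$ via the Rebolledo/Aldous criterion (vanishing jumps plus $\cC$-convergent compensator $A_n$), identification of the limiting martingale structure by passing the three conditional-expectation identities to the limit, and then Dambis--Dubins--Schwarz combined with Knight's theorem for the representation $M^\ast = W\circ A$ and the independence of $W$ from $B$ (hence from $A$, by $\dF_B$-measurability of $A$). The only genuine gap is exactly the one you already flag: uniform integrability in Step~2, most delicately for the second identity. Observe that Hypothesis~\ref{hyp:an_unbounded}.b(i)--(ii) yield $A_n(t)\to A(t)$ in $L^1$ and hence uniform integrability of $\{A_n(t)\}$, but this does not transfer directly to $\{M_n^2(t)\}$. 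Your proposed localization at $\sigma_n^K = \inf\{u: A_n(u) > K\}$ is the right fix; since $A_n$ is non-decreasing, $A_n(\sigma_n^K\wedge t)\le K + J_t(A_n)\le K + A_n(t)$, so the stopped compensator is $L^1$-bounded uniformly in $n$, and the $\cC$-convergence $A_n\to A$ with $A$ continuous lets you release the stopping in the limit. The cross-variation term is controlled via Kunita--Watanabe, $|\langle M_n,B_n\rangle_t|\le \{A_n(t)\langle B_n\rangle_t\}^{1/2}$, together with uniform integrability of $A_n(t)$ and $\langle B_n\rangle_t$.
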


Next, we have the following observations about L\'evy processes.

The characteristic function of an inhomogeneous L\'evy process $A$ is given by
$$
E\left[\left. e^{i\theta (A_t-A_s)}\right| \cF_s \right] = e^{\Psi_t(\theta)-\Psi_s(\theta)}, \quad 0\le s < t,
$$
where $A$ is $\mathbb{F}$-adapted with $\dF = (\cF_t)_{t\ge 0}$,
$$
\Psi_t(\theta)  = i \theta B_t  -\frac{\theta^2 C_t}{2}
+ \left\{\prod_{0<r\le t} e^{-ir\Delta B_r}\right\}\int_0^t \int \left(e^{i\theta x}-1-i\theta x\I_{\{|x|\le 1\}}\right)\nu(du,dx),
$$
$\Delta B_r = B_r-B_{r-}$, and $\nu$ is a L\'evy measure.
For details see \citet[Theorem II.4.15, Theorem II.5.2 and Corollaries II.5.11,12,13]{Jacod/Shiryaev:2003}.
The characteristics of $A$ are defined to be the deterministic functions $(B,C,\nu)$.

\begin{lem}\label{lem:main}
Suppose $A$ is an $\mathbb{F}$-adapted inhomogeneous L\'evy process with finite variation on any finite interval $[0,T]$ and no Brownian component, i.e., $C_t = 0$ for all $t\ge 0$. Then $A$ is independent of any $\mathbb{F}$-Brownian motion $W$.
\end{lem}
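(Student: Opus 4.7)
The plan is to obtain independence by establishing that the joint characteristic function of any finite collection of increments of $(A, W)$ factorizes as a product of the marginal characteristic functions of the increments of $A$ alone and of $W$ alone. I will achieve this via a product-martingale computation that exploits the vanishing of the continuous martingale part of $A$.

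The first step is the structural observation that, under the hypothesis $C_t = 0$, the continuous local-martingale part $A^c$ of the semimartingale $A$ is identically zero. Since the Brownian motion $W$ is continuous, $\Delta W_s \equiv 0$, and hence
\[
[A, W]_t \;=\; [A^c, W^c]_t + \sum_{0 < s \le t} \Delta A_s \Delta W_s \;=\; 0.
\]

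Fix $(\theta, \eta) \in \dR^2$ and introduce the complex-valued processes
\[
M_t \;:=\; \exp\{i\theta A_t - \Psi_t(\theta)\}, \qquad N_t \;:=\; \exp\{i\eta W_t + \tfrac{1}{2}\eta^2 t\}.
\]
By the L\'evy--Khintchine formula stated immediately before the lemma, $M$ is an $\mathbb{F}$-martingale; by the classical Brownian exponential, so is $N$. Both moduli $|M_t| = e^{-\operatorname{Re}\Psi_t(\theta)}$ and $|N_t| = e^{\eta^2 t/2}$ are deterministic and locally bounded, so $M$ and $N$ are bounded on each compact interval $[0, T]$. It\^o's product formula yields
\[
d(M_t N_t) \;=\; M_{t-}\, dN_t + N_{t-}\, dM_t + d[M, N]_t,
\]
and the combination of $[A, W] \equiv 0$ with $\Delta N \equiv 0$ gives $[M, N] \equiv 0$. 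Consequently $MN$ is a bounded local martingale on every $[0, T]$, hence a true martingale.

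Taking conditional expectations then produces, for $0 \le s \le t$,
\[
E\bigl[\exp\{i\theta(A_t - A_s) + i\eta(W_t - W_s)\} \bigm| \cF_s\bigr] \;=\; \exp\{\Psi_t(\theta) - \Psi_s(\theta)\}\cdot\exp\{-\tfrac{1}{2}\eta^2(t-s)\},
\]
in which the two factors on the right-hand side are precisely the individual conditional characteristic functions of $A_t - A_s$ and $W_t - W_s$. Iterating this identity along any partition $0 = t_0 < t_1 < \cdots < t_n$ through successive conditioning on $\cF_{t_{k-1}}$ delivers factorization of the joint characteristic function of the whole increment vector into a product of marginals, which yields independence of the two processes. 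The only delicate point is verifying $[M, N] \equiv 0$ and promoting $MN$ from a local to a true martingale; both rest on the hypothesis $C_t = 0$ and on the deterministic boundedness of the two exponentials on compact time intervals.
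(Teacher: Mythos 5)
Your proof is correct and follows essentially the same route as the paper's: both hinge on showing that the product of the exponential martingales $e^{i\theta A_t-\Psi_t(\theta)}$ and $e^{i\eta W_t+\eta^2 t/2}$ is a true martingale because their covariation vanishes (one factor is continuous, the other has no Brownian component and finite variation), whence the joint characteristic function factorizes. The only cosmetic difference is that you invoke It\^o's product formula and iterate the conditional identity over partitions explicitly, where the paper cites Applebaum's product formula for the two martingales and leaves the passage from fixed-time to process-level independence implicit.
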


\begin{proof}
If $W$ is an $\mathbb{F}$-Brownian motion, then the martingale $N_1(t) = e^{i \theta_1 W_t+\frac{\theta^2 t}{2}}$ is continuous and square integrable on any finite interval $[0,T]$ and the martingale $N_2(t) = e^{i \theta_2 A_t-\Psi_t(\theta_2)}$ has finite variation $V_T$ on any finite interval $[0,T]$, and $V_T$ is square integrable. Then, from \citet{Applebaum:2004}, one obtains that for any $t\ge 0$, with
$\Delta N(u) = N(u)-N(u-)$,
$$
E[N_1(t) N_2(t)] = 1 + \sum_{ 0<u \le t}E[\Delta N_1(u) \Delta N_2(u) ] = 1.
$$
Hence $E\left[ e^{i \theta_1 W(t)+ i \theta_2 A(t)}\right]  = e^{-\frac{\theta_1^2 t}{2} }e^{\Psi_t(\theta_2)}$, proving that $A(t)$ and $W(t)$ are independent. Using this and the independence of the increments of $A$ and $W$, one may conclude that $A$ and $W$ are independent.
\end{proof}

The proof of the following is in \citet[Lemma I.3.30]{Jacod/Shiryaev:2003}.

\begin{lem}[Lenglart's inequality]\label{lem:lenglart}
Let $X$ be an $\dF$-adapted $D$-valued process. Suppose that $Y$ is optional, non-decreasing,
and that, for any bounded stopping time $\tau$, $E|X(\tau)|\leq E\{Y(\tau)\}$.
Then for any stopping time $\tau$ and all $\varepsilon,\eta >0$,
\begin{itemize}
\item[{a)}]
if $Y$ is predictable,
\begin{equation}\label{eng1}
P(\sup_{s\leq \tau}|X(s)|\geq\varepsilon) \leq \frac{\eta}{\varepsilon} +
P(Y(\tau)\geq\eta).
\end{equation}

\item[b)]
if $Y$ is adapted,
\begin{equation}\label{eng2}
P(\sup_{s\leq \tau}|X(s)|\geq\varepsilon) \leq \frac{1}{\varepsilon}
\left[\eta+ E\left\{J_\tau(Y) \right\}\right] +
P(Y(\tau)\geq\eta).
\end{equation}
\end{itemize}
\end{lem}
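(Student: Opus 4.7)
The plan is to follow the classical argument found in \citet[Lemma I.3.30]{Jacod/Shiryaev:2003}: introduce two auxiliary stopping times, decompose the event $\{\sup_{s\le\tau}|X(s)|\ge\varepsilon\}$ according to whether $Y$ has reached $\eta$ before $\tau$, and then apply Markov's inequality in conjunction with the domination hypothesis $E|X(\tau')|\le EY(\tau')$. First I would reduce, via monotone convergence, to the case where $\tau$ is bounded by replacing it with $\tau\wedge N$ and letting $N\to\infty$, so that every stopping time built below is automatically bounded and the hypothesis is directly applicable.

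Next, introduce $\sigma_\eta:=\inf\{t\ge0;\,Y(t)\ge\eta\}$ and $T_\varepsilon:=\inf\{t\ge0;\,|X(t)|\ge\varepsilon\}$. Both are $\mathbb{F}$-stopping times: $T_\varepsilon$ is the debut of the closed set $\{|x|\ge\varepsilon\}$ by the c\`adl\`ag adapted $X$, and $\sigma_\eta$ is the debut of $[\eta,\infty)$ by the optional non-decreasing $Y$. Two elementary set inclusions carry most of the argument: by right-continuity of $X$ we have $\{\sup_{s\le\tau}|X(s)|\ge\varepsilon\}\subseteq\{T_\varepsilon\le\tau\}\cup\{|X(T_\varepsilon)|\ge\varepsilon\text{ at }T_\varepsilon\le\tau\}$, and by monotonicity of $Y$ we have $\{\sigma_\eta\le\tau\}\subseteq\{Y(\tau)\ge\eta\}$. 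Splitting on $\{\sigma_\eta>\tau\}$ versus $\{\sigma_\eta\le\tau\}$ yields
\[
P\bigl(\sup_{s\le\tau}|X(s)|\ge\varepsilon\bigr)\le P\bigl(T_\varepsilon\le\tau\wedge\sigma_\eta\bigr)+P\bigl(Y(\tau)\ge\eta\bigr).
\]
The domination hypothesis applied at the bounded stopping time $T_\varepsilon\wedge\tau\wedge\sigma_\eta$, combined with Markov's inequality and the fact that $|X(T_\varepsilon)|\ge\varepsilon$ on $\{T_\varepsilon<\infty\}$, gives
\[
P\bigl(T_\varepsilon\le\tau\wedge\sigma_\eta\bigr)\le\frac{E|X(T_\varepsilon\wedge\tau\wedge\sigma_\eta)|}{\varepsilon}\le\frac{EY(T_\varepsilon\wedge\tau\wedge\sigma_\eta)}{\varepsilon}\le\frac{EY(\tau\wedge\sigma_\eta)}{\varepsilon},
\]
the last step by monotonicity of $Y$.

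For part (a), predictability of $Y$ upgrades $\sigma_\eta$ to a predictable stopping time, with an announcing sequence $\sigma_n\uparrow\sigma_\eta$ strict on $\{\sigma_\eta>0\}$. Then $\sigma_n<\sigma_\eta$ forces $Y(\sigma_n)<\eta$, and one applies the above chain with $\sigma_\eta$ replaced by $\sigma_n$, obtaining $P(T_\varepsilon\le\tau\wedge\sigma_n)\le\eta/\varepsilon$. Letting $n\to\infty$ and taking a monotone limit of the left-hand events recovers the bound $\eta/\varepsilon$ for the $\sigma_\eta$ version; combined with $P(\sigma_\eta\le\tau)\le P(Y(\tau)\ge\eta)$ one gets \eqref{eng1}. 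For part (b), with no announcing sequence available, one instead controls $Y$ at its jump: $Y(\sigma_\eta-)\le\eta$ by definition and $Y(\sigma_\eta)-Y(\sigma_\eta-)\le J_\tau(Y)$ on $\{\sigma_\eta\le\tau\}$, so $EY(\tau\wedge\sigma_\eta)\le\eta+E J_\tau(Y)$, which plugged into the Markov bound yields \eqref{eng2}.

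The main technical nuisance — and what forces the split between parts (a) and (b) — is that $Y$ may jump across $\eta$ exactly at $\sigma_\eta$. In the predictable case this is defused by approaching $\sigma_\eta$ from the left along the announcing sequence (which by definition lies strictly before the hitting time, hence in the region $\{Y<\eta\}$); in the merely adapted case, no such approach exists and the best one can do is budget in the worst jump $J_\tau(Y)$. Everything else is routine bookkeeping of c\`adl\`ag boundary values and monotone passage to the limit.
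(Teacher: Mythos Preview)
Your proposal is correct and follows precisely the approach the paper points to: the paper does not supply its own proof but simply cites \citet[Lemma I.3.30]{Jacod/Shiryaev:2003}, and your argument is the standard one found there --- split on whether $Y$ reaches $\eta$ before $\tau$, apply the domination hypothesis at $T_\varepsilon\wedge\tau\wedge\sigma_\eta$ via Markov, and in the predictable case approach $\sigma_\eta$ from the left by an announcing sequence while in the adapted case absorb the overshoot into $J_\tau(Y)$.
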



We end with a few comments about $\cL^2_{loc}$ which are used in the proofs herein.

A (deterministic) sequence $x_n\in\cL^2_{loc}$ is said to $\cL^2_w$-converge to $x\in\cL^2_{loc}$ if and only if the sequence of scalar products $\int_0^Tx_n(t)f(t)dt \to \int_0^Tx(t)f(t)dt$ converges as $n\to\infty$, for each $f\in\cL^2_{loc}$ and each $T>0$.
This holds if and only if both the following statements hold  for every $T>0$: the sequence $\bigl\{|x_n|_T; n\ge1\bigr\}$ is bounded
and the requirement for convergence of the above scalar products is achieved for a norm dense subset of $\cL^2[0,T]$.
In fact, given any complete orthonormal set $\{f_k\}$ for $\cL^2[0,1]$, the countable family
$\{f_k(\ell^{-1}\cdot); k,\ell=1,2,\ldots\}$, with $f_k(\ell^{-1}t):=0$ when $t>\ell$,
will suffice, since it has a norm dense linear span in each vector space $\cL^2[0,T]$.
Alternatively, given any complete orthonormal set $\{g_k\}$ for Hilbert space $\cL^2[0,\infty)$ or $\cL^2(-\infty,\infty)$,
the same holds with the countable family $\{g_k\}$ --- for instance, in $\cL^2(-\infty,\infty)$,
take the Hermite functions $\{h_k;k\ge0\}$, defined by
$h_k(t):=(\pi^{1/2}2^kk!)^{-1/2}(-1)^ke^{t^2/2}D^k_t(e^{-t^2})$, where $D^k_t$ is the $k^{\hbox{th}}$ derivative with
respect to $t$, which also satisfy the uniform bound $\sup_{k,t}|h_k(t)|=h_0(0)=\pi^{-1/4}$, due to \citet{Szasz:1951}.
The metric $d$ on $\cL^2_{loc}$, defined by
$$
d(x,y):=\sum_{k=0}^\infty \sum_{\ell=1}^\infty 2^{-(k+\ell+1)}\left(1\wedge\left|\int_0^\ell\left\{x(t)-y(t)\right\}h_k(t)dt\right|\right),
$$
is compatible with the $\cL^2_w$-topology on all norm bounded subsets $B$ of either $\cL^2[0,\infty)$ ($\sup_{x\in B}|x|_\infty<\infty$)
or $\cL^2[0,T]$ ($\sup_{x\in B}|x|_T<\infty$), turning all $\cL^2_w$-closed and norm bounded subsets of
either $\cL^2[0,\infty)$ or $\cL^2[0,T]$ into complete separable metric (hereafter Polish) spaces, when equipped with this metric.
In fact, they are compact Polish spaces since all balls in either $\cL^2[0,\infty)$ or $\cL^2[0,T]$
are relatively compact in the $\cL^2_w$-topology --- for a proof see \citet{Lieb/Loss:2001}.


\section{Proofs}\label{app:remvail_prfs}

\subsection{Proof of Theorem \ref{thm:L2characterization}}\label{pf:L2characterization}

The projections $(\pi_{\ell_1,k_1},\pi_{\ell_2,k_2},\ldots,\pi_{\ell_m,k_m}):\cL^2_{loc}\to \dR^m$,
given by setting $\pi_{\ell,k}(f):=\int_0^\ell f(t)h_k(t)dt $ for all $k\ge0$ and $\ell,m\ge1$,
generate the Borel $\sigma$-algebra for metric $d$, and they characterize probability measures on $\cL^2_{loc}$,
just as in the proof of Kolmogorov's existence theorem for probability measures on $\dR^\infty$.
In the light of Lemma \ref{lem:L2tightness}, it suffices to prove that $(i)$ and $(ii)$ together imply,
jointly for all $(k_1,k_2,\ldots,k_m)\in\{0,1,2,\ldots\}^m$ and all $m\ge1$,
$$
\int_0^\cdot X_n(s)\{h_{k_1},h_{k_2},\ldots,h_{k_m}\}(s)ds  \stackrel{f.d.d.}{\rightsquigarrow}
\int_0^\cdot X(s)\{h_{k_1},h_{k_2},\ldots,h_{k_m}\}(s)ds
$$
as $n\to\infty$. The Cram{\'e}r-Wold device, applied to linear combinations of finitely many indicators of finite time intervals,
plus a density argument gives sufficiency. Necessity follows the same argument applied to $\{h_k\}$ to approximate $1\in\cL^2_{loc}$.
\qed

\subsection{Proof of Theorem \ref{thm:comp}}\label{pf:comp}

Setting $|X_n|_{\infty,t} = \sup_{0\le s\le t} |X_n(s)|$, $|X_n\circ Y_n|_T \le \sqrt{T} |X_n|_{\infty,Y_n(T)}$ follows.
Since $Y_n(T)$ is tight for every $T>0$, so is $|X_n\circ Y_n|_T$. Next, under alternatives a) or b),
$V_n = Y_n^{-1} \stackrel{\mathcal{M}_1}{\rightsquigarrow} V = Y^{-1}$, by Proposition \ref{prop:inversecontinuity}.
According to Theorem \ref{thm:L2characterization}, to complete the proof it suffices to prove the convergence of the finite dimensional distributions of the sequence of processes $\int_0^t X_n\circ Y_n(s)ds = Z_n\circ Y_n(t)$ to the process $\int_0^t X\circ Y(s)ds = Z\circ Y(t)$ , where $Z_n(t) = \int_0^t X_n(s) dV_n(s)$ and $Z(t) = \int_0^t X(s) dV(s)$.
Because $X_n$ $\cC$-converges, for $M>0$, there is a finite partition $0=t_0< t_1< t_2 \cdots <t_m=M$ of $[0,M]$ such that
$\delta_{n,m} = |X_n-X_{n,m}|_{\infty,M}$  and $\delta_{\infty,m} = |X-X_{\infty,m}|_{\infty,M}$ are as small as one wants with large probability, where $X_{n,m}(t) = \sum_{k=1}^m X_n(t_k)\I_{t_{k-1}\le t < t_k}$ and $X_{\infty,m}(t) = \sum_{k=1}^m X(t_k)\I_{t_{k-1}\le t < t_k}$.
Explicitly, for every $M>0$ and $\epsilon>0$, there are integers $m=m(M,\epsilon)\ge1$ and $N=N(M,m,\epsilon)\ge1$ such that
$
P(\sup_{n\ge N}\delta_{n,m}\ge\epsilon)\le3P(\delta_{\infty,m}\ge\epsilon)<\epsilon$.
 Similarly $|Z_n-Z_{n,m}|_{\infty,M}\le \sqrt{V_n(M)}\delta_{n,m}$ and
 $|Z-Z_{\infty,m}|_{\infty,M}\le \sqrt{V(M)}\delta_{\infty,m}$ hold, where
$$
Z_{n,m}(t)=\int_0^t X_{n,m}(s) dV_n(s) = \sum_{k=1}^m X_n(t_k)\left\{V_n(t_k\wedge t)-V_n(t_{k-1}\wedge t)\right\},
$$
and $Z_{\infty,m}(t) = \int_0^t X_{\infty,m} (s)dV(s)$.

By Proposition \ref{prop:inversecontinuity} and Remark \ref{rem:M1increasing}, $(X_n,Y_n,V_n) \stackrel{f.d.d.}{\rightsquigarrow} (X,Y,V)$ holds,
hence so does $Z_{n,m} \stackrel{f.d.d.}{\rightsquigarrow} Z_{\infty,m}$ for each fixed $m\ge1$.
Since $|Z_n-Z_{n,m}|_{\infty,M}\stackrel{Pr}{\rightsquigarrow}0$ and
$|Z-Z_{\infty,m}|_{\infty,M}\stackrel{Pr}{\rightsquigarrow}0$ hold for each fixed $n\ge1$ and $M>0$, plus $V_n(M)$ is tight for every $M>0$,
$Z_{n,m} \stackrel{f.d.d.}{\rightsquigarrow} Z_{\infty,m}$ for each fixed $m\ge1$ yields $Z_n \stackrel{f.d.d.}{\rightsquigarrow} Z$.
All that is left to show is that $Z_n\circ Y_n \stackrel{f.d.d.}{\rightsquigarrow} Z\circ Y$. Since $Y_n(T)$ is tight, choosing $\epsilon>0$, one can find $M = M_\epsilon$ so that for all
$n\in\dN$, $P(|Y_n(T)|> M) \le \epsilon$ and $P(|Y(T)|> M) \le \epsilon$. Suppose in addition that
$0=t_0< t_1 < \cdots <t_m=M$ are points of continuity of the distribution of $Y(T)$. Then, on $\{Y_n(T)\le M\}$,
\begin{eqnarray*}
Z_{n,m}\circ Y_n(T) &=& \sum_{j=1}^m \I\{t_{j-1}< Y_n(T)\le t_j\}X_n(t_j)\{V_n\circ Y_n(T)-V_n(t_{j-1})\}\\
&&\quad  +\sum_{j=1}^m \I\{t_{j-1}< Y_n(T)\le t_j\}\left[\sum_{k=1}^{j-1} X_n(t_k)\{V_n(t_k)-V_n(t_{k-1})\} \right]\\
&=& \sum_{j=1}^m \I\{t_{j-1}< Y_n(T)\le t_j\}X_n(t_j)\{V_n\circ Y_n(T)-V_n(t_{j-1})\}\\
&&\quad  +\sum_{k=1}^{m-1} X_n(t_k)\{V_n(t_k)-V_n(t_{k-1})\} \I\{t_{k}< Y_n(T)\le M\}.
\end{eqnarray*}
Since $(X_n,Y_n,V_n)\stackrel{f.d.d.}{\rightsquigarrow}(X,Y,V)$, it follows that $Z_{n,m}\circ Y_n(T)$
converges in law to $Z_{\infty,m}\circ Y(T)$.
 The rest of the proof follows similarly by considering common points of continuity of the
 marginal distributions of $Y(T_1),Y(T_2),\ldots,Y(T_i)$
 for any choice of $0=T_0< T_1 < \cdots <T_i=T$, first yielding
 $Z_{n,m}\circ Y_n \stackrel{f.d.d.}{\rightsquigarrow} Z_{\infty,m}\circ Y$ for each fixed $m\ge1$ and then
 $Z_n\circ Y_n \stackrel{f.d.d.}{\rightsquigarrow} Z\circ Y$.
\qed

\subsection{Proof of Theorem \ref{thm:clt_mart}}\label{pf:clt_mart}
Hypothesis \ref{hyp:bm_limit} implies that $\crochet{ W_n }$ is $\mathcal{C}$-tight;
applying Theorem \ref{thm:jumps_vanish} to martingale $W_n = M_n\circ \tau_n$ yields
$(W_n, \crochet{ W_n } ) \stackrel{\mathcal{C}}{\rightsquigarrow} (W,\crochet{ W } )$,
with $W$ a standard Brownian motion since $\crochet{ W }_t=t$, a deterministic trajectory.
Hypothesis \ref{hyp:an_unbounded} implies $A_n \stackrel{\mathcal{M}_1}{\rightsquigarrow} A$
(in the light of Remark \ref{rem:M1increasing}), which yields $M_n \stackrel{\cL^2_w}{\rightsquigarrow} M$,
by Theorem \ref{thm:comp}.
The independence of $W$ and $A$ stems from Lemma \ref{lem:main} in the case where $A$ is an inhomogeneous L\'evy process.
In the case where Hypothesis \ref{hyp:disc_disc1} is valid instead, Remark \ref{rem:jumps_tau} yields
$(A_n,\tau_n) \stackrel{\mathcal{M}_1}{\rightsquigarrow} (A,\tau)$. Since the pair $(\tau,W)$ is
$\mathbb{F}_{\tau}$-adapted and $\tau$ is a non-decreasing inhomogeneous L\'evy process, it has no Brownian component;
therefore, it is independent of $W$ by Lemma \ref{lem:main}. Since $A(0)=0$ guarantees the continuity of the reverse mapping
$\tau\mapsto A$, $A$ is also independent of $W$ and so is the pair $(A,\tau)$ as the image of Borel measurable mapping
$\tau\mapsto(A,\tau)$.
The independence of $W$ and $A$ implies $(A_n,W_n) \stackrel{f.d.d.}{\rightsquigarrow} (A,W)$
from which there follows $(W_n,A_n,\tau_n)  \stackrel{f.d.d.}{\rightsquigarrow} (W,A,\tau)$
under either of the alternate conditions in the statement.
Going back to the end of the proof of Theorem \ref{thm:comp} which yielded
$\int_0^\cdot  M_n(s)ds \stackrel{f.d.d.}{\rightsquigarrow} \int_0^\cdot  M(s)ds$, note that
common points of continuity of the marginal distributions of $A(T_1),A(T_2),\ldots,A(T_i)$
also allow for the discretization of the integrals $\cI(A_n)$ and $\cI(W_n)$ along the same pattern as
for $\cI(M_n) $, since $D$-valued $A_n$ and $W_n$ have bounded trajectories on compact time sets.
One gets, through the same reasoning as for $\cI(M_n) $,
$\left( \cI(M_n) , \cI(A_n) ,  \cI(W_n) \right ) \stackrel{f.d.d.}{\rightsquigarrow}
\left( \cI(M) , \cI(A) ,  \cI(W) \right )$.
By Lemma \ref{lem:lem_pairs} and Remark \ref{rem:RdExtension2},
$(M_n,A_n,W_n) \stackrel{\cL^2_w}{\rightsquigarrow} (M,A,W)$ ensues.
\qed

\subsection{Proof of (\ref{cVisnablaF})}\label{pf:cVisnablaF}

We only do the case where $V(x) = v(|x|)$ is radial as an illustration. Then $F(x) = \pi^{-1}\int_{\dR^2}V(y) \log|y-x|dy$ is also radial, $\int_0^\infty(r v(r)dr = 0$ since $\bar V=0$, and
$\disp
F(x) = \Phi(|x|) = \frac{1}{2\pi}\int_0^\infty \int_0^{2\pi} r v(r) \log\left(|x|^2+r^2-2r|x|\cos\theta\right)d\theta dr$.
Setting $\gamma = 2ra/(r^2+a^2) = 2\sin\alpha\cos\alpha=\sin(2\alpha)$, then successively
$$
\frac{1}{2\pi}\int_0^{2\pi}  \log\left(1-\gamma\cos\theta \right)d\theta = \log\left(\frac{1+\sqrt{ 1-\gamma^2}}{2}\right) = \log\left\{\frac{\max\left(r^2,a^2\right)}{r^2+a^2}\right\},
$$
\begin{eqnarray*}
\Phi(a) &=& \int_0^\infty r v(r) \log\left(a^2+r^2\right)  dr
                  +  \frac{1}{2\pi}\int_0^\infty \int_0^{2\pi} r v(r) \log\left(1-\gamma\cos\theta \right)d\theta dr\\
&=& \int_0^\infty r v(r) \log\left(a^2+r^2\right)  dr + \int_0^\infty r v(r) \log\left(\frac{\max(r^2,a^2)}{r^2+a^2}\right)dr\\
&=& 2\int_0^\infty r v(r) \log{\max\left(r,a\right) }dr.
\end{eqnarray*}
As  a result,
$\disp
 F(x) = 2\int_0^\infty r v(r) \log{\max\left(r,|x|\right) }dr = 2 \int_{|x|}^\infty r v(r)\log\left(\frac{r}{|x|}\right) dr
 $
 and
$|\nabla F(x)|^2=\left\{\frac{2}{|x|}\int_{|x|}^\infty  r v(r) dr\right\}^2$, so that
\begin{eqnarray*}
\int |\nabla F(x)|^2 dx  &=& 8\pi \int_0^\infty \frac{1}{r}\left\{\int_r^\infty  sv(s)ds\right\}^2   dr\\
&=& -8\pi \int_0^\infty \int_0^\infty ra v(r)v(a) \log{\max(r,a)}drda\\
&=& -2 \int_0^\infty \int_0^\infty   \int_0^{2\pi}  ra v(r)v(a) \log{ \left(r^2+a^2-2ra \cos\theta \right) }d\theta drda
\end{eqnarray*}
which is precisely $c_V$.
\qed

\begin{thebibliography}{}

\bibitem[Ané and Geman, 2000]{Ane/Geman:2000}
Ané, T. and Geman, H. (2000).
\newblock Order flow, transaction clock, and normality of asset returns.
\newblock {\em Journal of Finance}, 55:2259--2284.

\bibitem[Applebaum, 2004]{Applebaum:2004}
Applebaum, D. (2004).
\newblock {\em Lévy Processes and Stochastic Calculus}, volume~93 of {\em
  Cambridge Studies in Advanced Mathematics}.
\newblock Cambridge University Press, Cambridge.

\bibitem[Bingham, 1971]{Bingham:1971}
Bingham, N.~H. (1971).
\newblock {Limit theorems for occupation times of Markov processes}.
\newblock {\em Z. Wahrscheinlichkeitstheorie und Verw. Gebiete}, 17:1--22.

\bibitem[Ch{\'a}vez-Casillas et~al.,
  2019]{Chavez-Casillas/Elliott/Remillard/Swishchuk:2019}
Ch{\'a}vez-Casillas, J.~A., Elliott, R.~J., Rémillard, B., and Swishchuk,
  A.~V. (2019).
\newblock A level-1 limit order book with time dependent arrival rates.
\newblock {\em Methodol. Comput. Appl. Probab.}, 21(3):699--719.

\bibitem[Cs\'{a}ki et~al., 2004]{Csaki/Foldes/Hu:2004}
Cs\'{a}ki, E., F\"{o}ldes, A., and Hu, Y. (2004).
\newblock {Strong approximations of additive functionals of a planar Brownian
  motion}.
\newblock {\em Stochastic Process. Appl.}, 109(2):263--293.

\bibitem[Darling and Kac, 1957]{Darling/Kac:1957}
Darling, D.~A. and Kac, M. (1957).
\newblock {On occupation times for Markoff processes}.
\newblock {\em Trans. Amer. Math. Soc.}, 84:444--458.

\bibitem[Durrett, 1996]{Durrett:1996}
Durrett, R. (1996).
\newblock {\em Probability: Theory and Examples}.
\newblock Duxbury Press, Belmont, CA, second edition.

\bibitem[Erd\H{o}s and Taylor, 1960]{Erdos/Taylor:1960}
Erd\H{o}s, P. and Taylor, S.~J. (1960).
\newblock Some problems concerning the structure of random walk paths.
\newblock {\em Acta Math. Acad. Sci. Hungar.}, 11:137--162. (unbound insert).

\bibitem[Ethier and Kurtz, 1986]{Ethier/Kurtz:1986}
Ethier, S.~N. and Kurtz, T.~G. (1986).
\newblock {\em Markov processes: Characterization and convergence}.
\newblock Wiley Series in Probability and Mathematical Statistics: Probability
  and Mathematical Statistics. John Wiley \& Sons Inc., New York.

\bibitem[Feller, 1971]{Feller:1971}
Feller, W. (1971).
\newblock {\em An Introduction to Probability Theory and its Applications},
  volume~II of {\em Wiley Series in Probability and Mathematical Statistics}.
\newblock John Wiley \& Sons, second edition.

\bibitem[Guo et~al., 2020]{Guo/Remillard/Swishchuk:2020}
Guo, Q., Remillard, B., and Swishchuk, A. (2020).
\newblock {Multivariate general compound point processes in Limit Order Books}.
\newblock {\em Risks}, 8(3):98.

\bibitem[Jacod and Shiryaev, 2003]{Jacod/Shiryaev:2003}
Jacod, J. and Shiryaev, A.~N. (2003).
\newblock {\em Limit theorems for stochastic processes}, volume 288 of {\em
  Grundlehren der Mathematischen Wissenschaften [Fundamental Principles of
  Mathematical Sciences]}.
\newblock Springer-Verlag, Berlin, second edition.

\bibitem[Kallianpur and Robbins, 1953]{Kallianpur/Robbins:1953}
Kallianpur, G. and Robbins, H. (1953).
\newblock {Ergodic property of the Brownian motion process}.
\newblock {\em Proc. Nat. Acad. Sci. U. S. A.}, 39:525--533.

\bibitem[Kasahara and Kotani, 1979]{Kasahara/Kotani:1979}
Kasahara, Y. and Kotani, S. (1979).
\newblock On limit processes for a class of additive functionals of recurrent
  diffusion processes.
\newblock {\em Z. Wahrsch. Verw. Gebiete}, 49(2):133--153.

\bibitem[Le~Gall, 1992]{LeGall:1992}
Le~Gall, J.-F. (1992).
\newblock {\em Some properties of planar Brownian motion}, volume 1527 of {\em
  Lecture Notes in Mathematics}.
\newblock Springer, Heidelberg.
\newblock Lectures from the XXth \'{E}cole d'\'{E}té de Probabilités de
  Saint-Flour, 1990.

\bibitem[Lieb and Loss, 2001]{Lieb/Loss:2001}
Lieb, E.~H. and Loss, M. (2001).
\newblock {\em Analysis}, volume~14 of {\em Graduate Studies in Mathematics}.
\newblock American Mathematical Society, Providence, RI, second edition.

\bibitem[Madan et~al., 1998]{Madan/Carr/Chang:1998}
Madan, D.~B., Carr, P.~P., and Chang, E.~C. (1998).
\newblock The variance gamma process and option pricing.
\newblock {\em European Finance Review}, 2:79--105.

\bibitem[Meerschaert and Scheffler, 2004]{Meerschaert/Scheffler:2004}
Meerschaert, M.~M. and Scheffler, H.-P. (2004).
\newblock Limit theorems for continuous-time random walks with infinite mean
  waiting times.
\newblock {\em J. Appl. Probab.}, 41(3):623--638.

\bibitem[Robbins, 1953]{Robbins:1953}
Robbins, H. (1953).
\newblock On the equidistribution of sums of independent random variables.
\newblock {\em Proc. Amer. Math. Soc.}, 4:786--799.

\bibitem[Rémillard and Vaillancourt, 2024]{Remillard/Vaillancourt:2024a}
Rémillard, B. and Vaillancourt, J. (2024).
\newblock {Central limit theorems for martingales-I : continuous limits}.
\newblock {\em Electron. J. Probab.}, 29:1--18.

\bibitem[Scalas and Viles, 2012]{Scalas/Viles:2012}
Scalas, E. and Viles, N. (2012).
\newblock {On the convergence of quadratic variation for compound fractional
  Poisson processes}.
\newblock {\em Fract. Calc. Appl. Anal.}, 15(2):314--331.

\bibitem[Scalas and Viles, 2014]{Scalas/Viles:2014}
Scalas, E. and Viles, N. (2014).
\newblock {A functional limit theorem for stochastic integrals driven by a
  time-changed symmetric $\alpha$-stable Lévy process}.
\newblock {\em Stochastic Process. Appl.}, 124(1):385--410.

\bibitem[Skorohod, 1956]{Skorohod:1956}
Skorohod, A.~V. (1956).
\newblock Limit theorems for stochastic processes.
\newblock {\em Teor. Veroyatnost. i Primenen.}, 1:289--319.

\bibitem[Swishchuk et~al.,
  2019]{Swishchuk/Remillard/Elliott/Chavez-Casillas:2019}
Swishchuk, A.~V., Rémillard, B., Elliott, R.~J., and Chavez-Casillas, J.
  (2019).
\newblock {Compound Hawkes processes in limit order books}.
\newblock In Chevallier, J., Goutte, S., Guerreiro, D., Saglio, S., and
  Sanhaji, B., editors, {\em Financial Mathematics, Volatility and Covariance
  Modelling}, volume~2, pages 191--214. Routledge, Abingdon, UK.

\bibitem[Sz\'asz, 1951]{Szasz:1951}
Sz\'asz, O. (1951).
\newblock On the relative extrema of the hermite orthogonal functions.
\newblock {\em Journal of the Indian Mathematical Society}, 25:129--134.

\bibitem[Whitt, 2002]{Whitt:2002}
Whitt, W. (2002).
\newblock {\em Stochastic-process limits}.
\newblock Springer Series in Operations Research. Springer-Verlag, New York.
\newblock An introduction to stochastic-process limits and their application to
  queues.

\bibitem[Yamazaki, 1992]{Yamazaki:1992}
Yamazaki, Y. (1992).
\newblock {On limit theorems related to a class of ``winding-type'' additive
  functionals of complex Brownian motion}.
\newblock {\em J. Math. Kyoto Univ.}, 32(4):809--841.

\end{thebibliography}
\bibliographystyle{apalike}

\def\cprime{$'$}

\end{document}